\newtheorem{thm}{Theorem}[section]
\newtheorem{cor}[thm]{Corollary}
\newtheorem{lem}[thm]{Lemma}
\newtheorem{prop}[thm]{Proposition}
\newtheorem{rem}[thm]{Remark}
\newtheorem{defn}[thm]{Definition}
\numberwithin{equation}{section}
\newcommand{\lxr}{\longrightarrow}
\def\drawell#1#2#3#4{
\draw[draw=black,fill=gray,fill opacity=#4,line width=1pt]
(#1,0) circle [x radius=#2, y radius=#3];}
\begin{document}


\title[]{Silting complexes and Gorenstein projective modules}

\author[]{Nan Gao$^{*}$, Jing Ma, Chi-Heng Zhang\\
\ \ \ \\
\textit{D\MakeLowercase{edicated to} P\MakeLowercase{u} Z\MakeLowercase{hang on the occasion of his 60th birthday}}
}
\address{Department of Mathematics, Shanghai University, Shanghai 200444, PR China}
\thanks{2020 Mathematics Subject Classification. 18G80, 16S90, 16G10, 16E10.}
\thanks{The first author is the corresponding author.}
\email{nangao@shu.edu.cn, majingmmmm@shu.edu.cn, zchmath@shu.edu.cn}
\thanks{Supported by the National Natural Science Foundation of China (Grant No. 11771272 and 11871326).}

\date{\today}

\keywords{}

\subjclass[2010]{}

\begin{abstract}\ We introduce Gorenstein silting modules (resp. complexes), and give the relation with the usual
silting modules (resp. complexes). We show that Gorenstein silting modules are the module-theoretic counterpart of 2-term Gorenstein silting complexes; and partial Gorenstein silting modules are in bijection with $\tau_{G}$-rigid modules for finite dimensional algebras of finite CM-type. We also give the relation between 2-term Gorenstein silting complexes, t-structures and torsion pair in module categories; and generalise the classical Brenner-Butler theorem to this setting; and characterise the global dimension of endomorphism algebras of 2-term Gorenstein silting complexes over an algebra $A$ by terms of the Gorenstein global dimension of $A$.
\end{abstract}

\maketitle

\vskip 10pt

\section{\bf Introduction and Preliminaries}

\vskip 5pt

Silting complexes were first introduced by Keller and Vossieck \cite{KV} to study $t\mbox{-}$structures in the bounded derived category of representations of Dynkin quivers. They generalize tilting complexes and, thus, finitely generated tilting modules introduced by Auslander and Solberg \cite{AS1}. Hoshino-Kato-Miyachi \cite{HKM, MM} gave the relation between 2-term silting complexes and torsion pair in module categories. Then Buan-Zhou \cite{BZ1} studied 2-term silting complexes in the bounded homotopy categories, and they generalized the classical tilting theorem to the silting situation. After that, they considered the global dimension of endomorphism algebras of 2-term silting complexes in \cite{BZ2}.

\vskip 10pt

Support $\tau\mbox{-}$tilting modules are the module-theoretic counterpart of 2-term silting complexes. They were introduced over finite dimensional algebras by Adachi-Iyama-Reiten \cite{AIR}, who showed that these modules admit mutation and that there is a mutation-preserving bijection with 2-term silting complexes. Silting modules introduced by Angeleri H\"{u}gel-Marks-Vit\'{o}ria \cite{AMV1} over an arbitrary ring who are intended to generalize tilting modules in a similar fashion as 2-term silting complexes generalize 2-term tilting complexes and also, coincide with support $\tau\mbox{-}$tilting modules for finite dimensional algebras. Adachi-Iyama-Reiten \cite{AIR} showed how 2-term silting complexes relate with silting modules, t-structure, and co-t-structure. In \cite{MS}, they showed that silting modules are in bijection with universal localisations for finite dimensional algebras of finite representation type. Then Angeleri H\"{u}gel-Marks-Vit\'{o}ria \cite{AMV2} constructed the bireflective subcategory associated with a partial silting module and studied silting modules over hereditary rings. Moreover, Aihara \cite{A} gave a necessary condition for silting to be tilting under the self-injective case. Recently, Li-Zhang \cite{LZ} showed the existence of nontrival Gorenstein projective (support) $\tau$-tilting modules except support $\tau$-tilting modules over selfinjective algebras.

\vskip 10pt

The main idea of Gorenstein homological algebra is to replace projective modules by Gorenstein-projective modules. These modules were introduced by Enochs and Jenda \cite{EJ1} as a generalization of finitely generated module of G-dimension zero over a two-sided Noetherian ring, in the sense of Auslander and Bridger \cite{AB}. The subject has been developed to an advanced level, see for example \cite{ABu,AR,Hap,EJ2,Ch,AM,Hol,B1,CFH,BR,J, Chen, GZ, GK, RZ1, RZ2, RZ3}. As the version of  tilting modules in Gorenstein homological algebra, the definition of Gorenstein tilting module was introduced (see \cite{AS1, G1, YLO}). Later Zhang \cite{Z} introduced the notion of Gorenstein star modules, and gave a close relation between Gorenstein star modules and Gorenstein tilting modules.

\vskip 10pt

As the correspondence of algebras of finite representation type in Gorenstein homological algebra, Beligiannis \cite{B2} introduced and studied the algebras of finite Cohen-Macaulay type (resp. finite CM-type for simply). For this class of algebras, Gao \cite{G2} introduced the relative transpose ${\rm Tr}_{G}$ by the term of Gorenstein-projective modules and the corresponding Auslander-Reiten formula.

\vskip 10pt

Based on these work, there are the following natural questions:

\vskip 5pt

{\bf Question A:}\  Which class of modules coincides with the above $\tau_{G}$-rigid module?

\vskip 10pt

{\bf Question B:}\ What's the correspondence in the bounded homotopy category with respect to the above modules?

\vskip 10pt

\begin{tikzpicture}[font=\rmfamily,scale=1]
\drawell{0}{4}{3}{0};
\node at (-2,1){\parbox[c]{8em}{{\tiny partial.silting.module}\\
{\tiny=$\tau\mbox{-}$rigid}}};

\draw[draw=black,line width=1pt,fill=gray,fill opacity=.5]
(1.45,0) circle [x radius=2.6, y radius=2.4];
\node at (1,1) {\parbox[c]{8em}{{\tiny silting.module=support}
\vskip 1pt
{\tiny $\tau\mbox{-}$tilting.module}}};

\draw[draw=black,line width=1pt,fill=white,fill opacity=.5]
(5,-1.5) circle [x radius=4, y radius=3];
\node at (5.2,-4.15) {\parbox[c]{8em}{{\tiny $\tau_{G}\mbox{-}$rigid.module=partial}\\
{\tiny Gorenstein.silting.module}\\}};

\draw[draw=black,line width=1pt,fill=gray,fill opacity=.5]
(4.1,-1.1) circle [x radius=2.8, y radius=2.33];
\node at (4.3,-2.7) {\parbox[c]{8em}{{\tiny Gorenstein.silting.module}}};

\draw[draw=black,line width=1pt,fill=blue,fill opacity=.5]
(3.6,-0.5) circle [x radius=1.5, y radius=1];
\node at (3.7,-0.5) {\parbox[c]{8em}{{\tiny Gproj.supp.}}};
\node at (5.5,-1) {\parbox[c]{8em}{{\tiny Gproj.}}};
\end{tikzpicture}

\vskip 10pt

The answers are given in the paper. We organize the paper as follows. In Section 2, we introduce the Gorenstein silting module and show it is in bijection with the relative rigid module over an algebra of CM-type. We also show the relations among  Gorenstein tilting modules, Gorenstein star modules  and Gorenstein silting modules. In Section 3, we introduce the 2-term Gorenstein silting complex and show that the Gorenstein silting module is its module-theoretic counterpart. We also characterise it by the connection with the t-structure and torsion pair, and show the corresponding Brenner-Butler theorem, and characterise the global dimension of endomorphism algebras of 2-term Gorenstein silting complexes over an algebra $A$ by terms of the Gorenstein dimension of $A$.

\vskip 10pt

First we  give the main definitions in the paper.

\vskip 10pt

{\bf Definition~\ref{Gsmodule}}\ Let $R$ be a Noetherian ring. We say that an $R\mbox{-}$module $T$ is

\vskip 5pt

$\bullet$ \ partial Gorenstein silting if there is a proper Gorenstein-projective presentation $\theta$ of $T$ such that

\vskip 5pt

\ \ (Gs1) $D_{\theta}$ is a relative torsion class (i.e. closed for $G\mbox{-}$epimorphic images, $G\mbox{-}$extensions and coproducts);

\vskip 5pt

\ \ (Gs2) $T$ lies in $D_{\theta}$.

\vskip 5pt

$\bullet$ \ Gorenstein silting if there is a proper Gorenstein-projective presentation $\theta$ of $T$ such that ${\rm Gen}_{G}(T)=D_{\theta}$.

\vskip 10pt

{\bf Definition~\ref{rigid}}\ Let $A$ be a finite dimensional $k\mbox{-}$algebra of finite CM-type with the Gorenstein-projective generator $E$. We say that an $A\mbox{-}$module $M$ is $\tau_{G}\mbox{-}$rigid if ${\rm Hom}_{{\rm A(Gproj)}}$ $((E,M), \tau_{G}M)=0$ for the left ${\rm A(Gproj)}\mbox{-}$module ${\rm Hom}_{A}(E,M)$.

\vskip 10pt

{\bf Definition~\ref{Gscomplex}}\ Let  $G^{\bullet}: G_{1}\stackrel{d^{1}}{\lxr} G_{0}$ be a complex with $G_{i}\in {\rm Gproj}A$ for $i=0,1$. We say that $G^{\bullet}$ is

\vskip 5pt

$\bullet$ \ 2-term partial Gorenstein silting in $K^{b}({\rm Gproj}A)$ if it satisfies the following two conditions:

\vskip 5pt

{\rm (i)} $G_{1}\lxr {\rm Im}d^{1}$ and $G_{0}\lxr {\rm Coker}d^{1}$ are right ${\rm Gproj}A$-approximations;

\vskip 5pt

{\rm (ii)} ${\rm Hom}_{D_{gp}^{b}(A)}(G^{\bullet}, G^{\bullet}[1])=0$.

\vskip 5pt

$\bullet$ \ 2-term Gorenstein silting in $K^{b}({\rm Gproj}A)$ if it is a  2-term partial Gorenstein silting complex and ${\rm thick}G^{\bullet}=D_{gp}^{b}(A)$.

\vskip 10pt

Our main theorems are as follows:

\vskip 5pt

{\bf Theorem A}\ (Theorem~\ref{properties} and~\ref{equivalence}) \ Let $A$ be a finite dimensional algebra of finite CM-type with the Gorenstein-projective generator $E$. Let $M$ be an $A\mbox{-}$module in ${\rm mod}A$.  Then the following statements hold.
\begin{enumerate}

\item $M$ is $\tau_{G}\mbox{-}$rigid if and only if ${\rm Hom}_{A}(E, M)$ is a $\tau\mbox{-}$rigid module, where $\tau$ is the Auslander-Reiten translation over ${\rm A(Gproj)}$.

\vskip 5pt

\item $\tau_{G}M\cong \tau{\rm Hom}_{A}(E, M).$

\vskip 5pt

\item $M$ is $\tau_{G}$-rigid if and only if $M$ is  a partial Gorenstein silting $A$-module.
\end{enumerate}

\vskip 10pt

Let $B={\rm End}_{D_{gp}^{b}(A)}(G^{\bullet})^{op}$. Consider the subcategories of ${\rm mod}B$
$$\mathcal{X}(G^{\bullet})= {\rm Hom}_{D_{gp}^{b}(A)}(G^{\bullet},\ \mathcal{F}(G^{\bullet})[1]) \ \ \ and \ \ \ \mathcal{Y}(G^{\bullet})= {\rm Hom}_{D_{gp}^{b}(A)}(G^{\bullet},\ \mathcal{T}(G^{\bullet})).$$

\vskip 5pt

{\bf Theorem B}\ (Theorem~\ref{torsionpair}, Theorem~\ref{torsioninb} and Theorem~\ref{gldim})\ Let $G^{\bullet}: G_{1}\lxr G_{0}$ be a 2-term Gorenstein silting complex in $D_{gp}^{b}(A)$, where $G_{i}\in {\rm Gproj}A$. Then the following statements hold.
\begin{enumerate}

\item $(\mathcal{T}(G^{\bullet}), \mathcal{F}(G^{\bullet}))$ is a torsion pair for ${\rm mod}A$.

\vskip 5pt

\item $(\mathcal{X}(G^{\bullet}), \mathcal{Y}(G^{\bullet}))$ is a torsion pair in ${\rm mod}B$ and there are equivalences
$${\rm Hom}_{D_{gp}^{b}(A)}(G^{\bullet}, -): \mathcal{T}(G^{\bullet})\lxr \mathcal{Y}(G^{\bullet}),$$
and
$${\rm Hom}_{D_{gp}^{b}(A)}(G^{\bullet}, -[1]): \mathcal{F}(G^{\bullet})\lxr \mathcal{X}(G^{\bullet}).$$

\vskip 5pt

\item ${\rm gldim} B\leq {\rm Gdim}A+1$.
\end{enumerate}
\vskip 10pt

Throughout $A$ is a finite dimensional $k\mbox{-}$algebra over a field $k$, and ${\rm mod}A$ is the category of finitely generated left $A\mbox{-}$modules. A module $G$ of ${\rm mod}A$ is Gorenstein-projective if there is an exact sequence
$$\cdots \lxr P^{-1}\lxr P^{0}\stackrel{d^{0}}{\lxr} P^{1}\lxr \cdots$$
of projective modules of ${\rm mod}A$, which stays exact after applying ${\rm Hom}_{A}(-, P)$ for each projective module $P$, such that $G\cong {\rm Ker}d^{0}$ (see \cite{EJ2}). Denote by ${\rm Gproj}A$ and ${\rm proj}A$ the full subcategories of ${\rm mod}A$ consisting of Gorenstein-projective modules and projective modules, respectively.

\vskip 10pt

Let $\mathcal{A}$ be an abelian category and $\mathcal{X},\ \mathcal{Y}$ the full additive subcategories of $\mathcal{A}$. Let $M$ be an object of $\mathcal{A}$. Following~\cite{AS2}, a morphism $f:X\lxr M$ with $X\in \mathcal{X}$ is called a right $\mathcal{X}\mbox{-}$approximation of $M$ if any morphism from an object $\mathcal{X}$ to $M$ factors through $f$. $\mathcal{X}$ is called contravariantly finite if any object in $\mathcal{A}$ admits a right $\mathcal{X}\mbox{-}$approximation. A morphism $g:M\lxr Y$ with $Y\in \mathcal{Y}$ is called a left $\mathcal{Y}\mbox{-}$approximation of $M$ if any morphism from $M$ to an object $\mathcal{Y}$ factors through $g$. $\mathcal{Y}$ is called covariantly finite if any object in $\mathcal{A}$ admits a left $\mathcal{Y}\mbox{-}$approximation.

\vskip 10pt

Following~\cite{EJ2}, an exact sequence $G_{1}\xrightarrow{d^{1}} G_{0}\lxr M\lxr 0 \ (*)$ is called a proper Gorenstein-projective presentation of $M$ if each $G_{i}$ is Gorenstein-projective and ${\rm Hom}_{A}(G,$ $G_{1})\lxr {\rm Hom}_{A}(G,G_{0}) \lxr {\rm Hom}_{A}(G,T)\lxr 0$ is exact for any Gorenstein-projective module $G$. $(*)$ is minimal if $G_{1}\lxr {\rm Im}d^{1}$ and $G_{0}\lxr M$ are right ${\rm Gproj}A\mbox{-}$approximation.  Moreover, the exact sequence $0\lxr G_{n}\lxr \ldots \lxr G_{1} \lxr G_{0} \rightarrow M \rightarrow 0$  is called a proper Gorenstein-projective resolution of $M$ of length $n$ for some non-negative integer $n$, if each $G_{i}$ is all Gorenstein-projective and $0\lxr {\rm Hom}_{A}(G, G_{n})\lxr \cdots \lxr {\rm Hom}_{A}(G,G_{0}) \lxr {\rm Hom}_{A}(G,M)\lxr 0$ is exact for any Gorenstein-projective module $G$. We say that $M$ has Gorenstein-projective dimension $d$, denoted by ${\rm Gpd}M$, if $d$ is the least, and  the Gorenstein dimension of $A$ is defined as follows:
$${\rm Gdim}A={\rm sup}\{{\rm Gpd}M|M\in {\rm mod}A\}.$$

\vskip 10pt

Now we write $C^{b}(A),\ K^{b}(A)$ and $D^{b}(A)$ for the bounded complex category, bounded homotopy category and bounded derived category of ${\rm mod}A$, respectively. Denote by $K^{b}({\rm Gproj}A)$ (resp. $K^{b}({\rm proj}A)$) the corresponding bounded homotopy category of Gorenstein-projective modules (resp. projective modules).

\vskip 10pt

\subsection{ Gorenstein derived category}
\vskip 5pt

A complex $C^{\bullet}\in C^{b}(A)$  is $\mathcal{GP}\mbox{-}$acyclic, if ${\rm Hom}_{A}(G,$ $ C^{\bullet})$ is acyclic for each $G\in {\rm Gproj}A$.  A chain map $f^{\bullet}: X^{\bullet}\lxr Y^{\bullet}$ is a $\mathcal{GP}\mbox{-}$quasi-isomorphism, if ${\rm Hom}_{A}(G, f^{\bullet})$ is a quasi-isomorphism for each $G\in {\rm Gproj}A$, i.e., there are isomorphisms of abelian groups for any $n\in \mathbb{Z}$,
$$H^{n}{\rm Hom}_{A}(G, f^{\bullet}): H^{n}{\rm Hom}_{A}(G, X^{\bullet})\cong H^{n}{\rm Hom}_{A}(G, Y^{\bullet}).$$
Put
$$K^{b}_{gpac}(A):=\{ X^{\bullet}\in K^{b}(A)\mid X^{\bullet} \ is \ \mathcal{GP}\mbox{-}\ acyclic\}.$$
Then $K^{b}_{gpac}(A)$ is a thick triangulated subcategory of $K^{b}(A)$. Following~\cite{GZ}, we have the following definition:
$$D_{gp}^{b}(A):=K^{b}(A)/ K^{b}_{gpac}(A),$$
which is called the bounded Gorenstein derived category.

\vskip 10pt

Following~\cite{AS1}, an exact sequence $0\lxr X^{\bullet}\stackrel{f}{\lxr} Y^{\bullet}\stackrel{g}{\lxr} Z^{\bullet}\lxr 0$ in  $C^{b}(A)$ is $G\mbox{-}$exact if and only if $0\lxr {\rm Hom}_{A}(G, X^{\bullet})\lxr {\rm Hom}_{A}(G, Y^{\bullet})\lxr {\rm Hom}_{A}(G, Z^{\bullet})\lxr 0$ is exact for all $G\in {\rm Gproj}A$. In the module case, $g$ is called a $G$-epimorphism and $X$ is called a $G$-submodule of $Y$. From \cite{GZ}, if the exact sequence $0\lxr X\lxr Y\lxr Z\lxr 0$ in ${\rm mod}A$ is $G\mbox{-}$exact, then $X\lxr Y\lxr Z\lxr X[1]$ is a triangle in $D_{gp}^{b}(A)$.

\vskip 10pt

\subsection{\bf CM-Auslander algebra}
\vskip 5pt

Recall from \cite{B2} that  $A$ is of finite Cohen-Macaulay type (resp. finite CM-type for simply), if there are only finitely many isomorphism classes of indecomposable finitely generated Gorenstein-projective $A$-modules. Let $\{E_{i} \}_{i=1}^{n}$ be all non-isomorphic finitely generated Gorenstein-projective $A\mbox{-}$modules and $E=\mathop{\oplus}\limits_{i=1}^{n}E_{i}$, and ${\rm A(Gproj)}= {\rm End}_{A}(E)^{op}$. Then $E$ is an $A\mbox{-}{\rm A(Gproj)}\mbox{-}$bimodule, and ${\rm A(Gproj)}$ is called the the Cohen-Macaulay Auslander (resp. CM-Auslander for simply) algebra of $A$.

\vskip 10pt

\subsection{\bf torsion pair }
\vskip 5pt

\begin{defn}\ {\rm (\cite{D})}\  A pair $(\mathcal{T}, \mathcal{F})$ of full subcategories of ${\rm mod}A$ is called a torsion pair provided that:

\vskip 5pt

\begin{enumerate}
\item ${\rm Hom}_{A}(M, N)=0$ for all $M\in \mathcal{T},\ N\in \mathcal{F}$;

\vskip 5pt

\item ${\rm Hom}_{A}(M, -)|_{\mathcal{F}}=0$ implies $M\in \mathcal{T}$;

\vskip 5pt

\item ${\rm Hom}_{A}(-, N)|_{\mathcal{T}}=0$ implies $N\in \mathcal{F}$.
\end{enumerate}
\end{defn}

\vskip 10pt

\subsection{\bf t-structure }
\vskip 5pt

\begin{defn}\ {\rm (\cite{BBD})}\  A pair $(\mathcal{X}, \mathcal{Y})$ of subcategories of the triangulated category $D_{gp}^{b}(A)$ is called a t-structure provided that:

\vskip 5pt

\begin{enumerate}
\item $\mathcal{X}[1]\subset \mathcal{X}$ and $\mathcal{Y}[-1]\subset \mathcal{Y}$;

\vskip 5pt

\item ${\rm Hom}_{D_{gp}^{b}(A)}(X, Y[-1])=0$ for any $X\in \mathcal{X}$ and $Y\in \mathcal{Y}$;

\vskip 5pt

\item for any $C\in D_{gp}^{b}(A)$, there exists a distinguished triangle
$$X\lxr C\lxr Y[-1]\lxr X[1]$$
with $X\in \mathcal{X}$ and $Y\in \mathcal{Y}$.
\end{enumerate}
\end{defn}

\vskip 10pt

\section{\bf Gorenstein silting modules}

\vskip 5pt

In this section, we introduce and study a class of modules in bijection with the relative rigid module over an algebra of CM-type introduced by Gao, which we call the Gorenstein silting module. We also show the relations among  Gorenstein tilting modules, Gorenstein star modules  and Gorenstein silting modules.

\vskip 10pt

\subsection{\bf Gorenstein silting modules} \ In this subsection, we introduce the Gorenstein silting module. Before this, we make some preparation.

\vskip 5pt

Throughout this subsection, let $R$ be a Noetherian ring, and ${\rm Mod}R$ the category of all left $R\mbox{-}$modules. We denote by $Gp(R)$(resp.\ $Gi(R)$) the full subcategory of ${\rm Mod}R$ consisting of Gorenstein-projective (resp. Gorenstein-injective)  modules.

\vskip 5pt

Now we assume that $Gp(R)$ is contravariantly finite in ${\rm Mod}R$. For $R\mbox{-}$modules $M$ and $N$, we compute right derived functors of ${\rm Hom}_{R}(M, N)$ using a Gorenstein-projective resolution of $M$ (\cite{EJ2}, \cite{Hol}). We will denote these derived functors by ${\rm Gext}_{R}^{i}(M, N)$. A short exact sequence $0\lxr M\lxr N\lxr L\lxr 0$ is  $G\mbox{-}$exact if and only if it is in ${\rm Gext}_{R}^{1}(L, M)$.

\vskip 10pt

Let $T$ be an $R$-module. Put
$${\rm Pres}_{G}(T):=\{M\in {\rm Mod}R|\exists \ a \ G\mbox{-}exact \ sequence \ T_{1}\lxr T_{0}\lxr M\lxr 0$$ $$ \ with \ T_{i}\in {\rm Add}T\},$$
$${\rm Gen}_{G}(T):=\{M\in {\rm Mod}R|\exists \ a \ G\mbox{-}exact \ sequence \ T_{0}\lxr M\lxr 0 \ with \ T_{0}\in {\rm Add}T\},$$
and
$$T^{G\perp}:=\{M\in {\rm Mod}R|{\rm Gext}^{1}_{R}(T, M)=0\},\ \ \ T^{\perp 0}:=\{M\in {\rm Mod}R|{\rm Hom}_{R}(T, M)=0\},$$
where ${\rm Add}T$ denotes the subcategory of modules consisting of direct summands of direct sums of $T$

\vskip 10pt

For a morphism $\theta: G_{1}\lxr G_{0}$ with $G_{1}$ and $G_{0}$ are Gorenstein-projective modules. We consider the class of $R\mbox{-}$modules
$$D_{\theta}:=\{X\in {\rm Mod}R\mid {\rm Hom}_{R}(\theta, X) \ {\rm is \ epic}\}.$$
We first collect some useful propoerties of $D_{\theta}$.

\vskip 10pt

\begin{lem}\
\label{silting}
Let $\theta: G_{1}\lxr G_{0}$ with $G_{1}$ and $G_{0}$ are Gorenstein-projective modules, and $T$ the cokernel of $\theta$ such that $\theta$ being the  proper Gorenstein-projective presentation of $T$.

\vskip 5pt

{\rm (i)} $D_{\theta}$ is closed under $G\mbox{-}$epimorphic images, $G\mbox{-}$extensions and direct products.

\vskip 5pt

{\rm (ii)} The class $D_{\theta}$ is contained in $T^{G\perp}$.
\begin{proof}\ The statement (i) is easy to prove. Now we prove (ii). There exists the following diagram
\[
\xymatrix@C=0.5cm{
 && G_{1}\ar@{->}[rr]^{\theta}    && G_{0} \ar[rr]^{} \ar@{<-_{)}}[dl]_{i} &&T \ar[rr]      &&0\\
   &&& {\rm Im}\theta \ar@{<<-}[ul]^{\pi}  &&& \\
}
\]
Consider the $G\mbox{-}$exact sequence $0\lxr {\rm Im}\theta\stackrel{i}{\lxr} G_{0}\lxr T\lxr 0$. Applying the functor ${\rm Hom}_{R}(-, X)$ for any $X\in D_{\theta}$, then we get the $G\mbox{-}$exact sequence
$${\rm Hom}_{R}(G_{0}, X)\stackrel{i^{*}}{\lxr} {\rm Hom}_{R}({\rm Im}\theta, X)\lxr {\rm Gext}_{R}^{1}(T, X)\lxr 0.$$
We show that $i^{*}$ is surjective. Let $f\in {\rm Hom}_{R}({\rm Im}\theta, X)$. Since $X\in D_{\theta}$, there is a map $g: G_{0}\lxr X$ such that $f\pi= g\theta= gi\pi$. Since $\pi$ is an epimorphism, we have $f= gi$. Hence ${\rm Gext}_{R}^{1}(T, X)=0$, and so $X\in T^{G\perp}$.
\end{proof}
\end{lem}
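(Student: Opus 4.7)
The plan is to dispatch (i) by routine diagram chasing for each of the three closure properties, and to establish (ii) by factoring $\theta$ through its image and exploiting the hypothesis that $\theta$ is a \emph{proper} Gorenstein-projective presentation.

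For (i), I would begin with direct products, which is immediate since $\mathrm{Hom}_R(G_j,-)$ commutes with products and a product of surjections is surjective. For closure under $G$-epimorphic images, given a $G$-epimorphism $p : X \twoheadrightarrow Y$ with $X \in D_\theta$ and any $g : G_1 \to Y$, I would use that $G_1$ is Gorenstein-projective to lift $g$ to $\tilde g : G_1 \to X$, then apply $X \in D_\theta$ to $\tilde g$ to obtain $h : G_0 \to X$ with $h\theta = \tilde g$, so that $ph : G_0 \to Y$ realises $g$ through $\theta$. Closure under $G$-extensions is then a short diagram chase: apply $\mathrm{Hom}_R(\theta,-)$ to a $G$-exact sequence $0 \to X \to Y \to Z \to 0$ with $X, Z \in D_\theta$; the resulting columns $0 \to \mathrm{Hom}_R(G_j,X) \to \mathrm{Hom}_R(G_j,Y) \to \mathrm{Hom}_R(G_j,Z) \to 0$ are exact because the ambient sequence is $G$-exact and $G_0, G_1$ are Gorenstein-projective, and a subtraction argument forces the middle row to be surjective.

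For (ii), the move is to factor $\theta$ as $G_1 \xrightarrow{\pi} \mathrm{Im}\,\theta \xrightarrow{i} G_0$. Properness of the presentation guarantees that $0 \to \mathrm{Im}\,\theta \xrightarrow{i} G_0 \to T \to 0$ is $G$-exact, so applying $\mathrm{Hom}_R(-,X)$ for $X \in D_\theta$ yields the exact sequence
\[ \mathrm{Hom}_R(G_0, X) \xrightarrow{i^*} \mathrm{Hom}_R(\mathrm{Im}\,\theta, X) \to \mathrm{Gext}_R^1(T, X) \to 0. \]
It then suffices to show $i^*$ is surjective: given $f : \mathrm{Im}\,\theta \to X$, the composite $f\pi : G_1 \to X$ must factor through $\theta$ by the definition of $D_\theta$, yielding $g : G_0 \to X$ with $g\theta = f\pi$, i.e.\ $(gi)\pi = f\pi$; since $\pi$ is epic we get $gi = f$, whence $\mathrm{Gext}_R^1(T,X) = 0$ and $X \in T^{G\perp}$.

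I expect the only point requiring attention to be the bookkeeping of $G$-exactness. Specifically, properness of the presentation is exactly what is needed in (ii) to promote the factorisation $\theta = i\pi$ to a $G$-exact short exact sequence, and $G$-exactness of the ambient sequence in the $G$-extension part of (i) is what keeps the columns of the chase diagram exact. Beyond this, neither part should present a substantive obstacle.
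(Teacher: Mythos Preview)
Your proposal is correct and follows essentially the same approach as the paper. Part (ii) is argued identically---factor $\theta$ through its image, use properness to obtain the $G$-exact sequence $0 \to \mathrm{Im}\,\theta \to G_0 \to T \to 0$, and show $i^*$ is surjective via the same cancellation by the epimorphism $\pi$---while for part (i) the paper simply declares the result ``easy to prove'' and you have supplied the routine details the authors omitted.
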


\vskip 10pt

\begin{defn}\
\label{Gsmodule}
We say that an $R\mbox{-}$module $T$ is

\vskip 5pt

$\bullet$ \ {\bf partial Gorenstein silting} if there is a proper Gorenstein-projective presentation $\theta$ of $T$ such that

\vskip 5pt

\ \ (Gs1) $D_{\theta}$ is a relative torsion class (i.e. closed for $G\mbox{-}$epimorphic images, $G\mbox{-}$extensions and coproducts);

\vskip 5pt

\ \ (Gs2) $T$ lies in $D_{\theta}$.

\vskip 5pt

$\bullet$ \ {\bf Gorenstein silting} if there is a proper Gorenstein-projective presentation $\theta$ of $T$ such that ${\rm Gen}_{G}(T)=D_{\theta}$.
\end{defn}

\vskip 10pt

\begin{prop}\ Let $T$ be an $R$-module with the proper Gorenstein-projective presentation $\theta: G_{1}\to G_{0}$. If $T$ is a partial Gorenstein silting module with respect to $\theta$, and for each $P\in Gp(R)$, there exists a $G\mbox{-}$exact sequence $P\stackrel{\phi}{\lxr} T_{0}\lxr T_{-1}\lxr 0$ with $T_{0}$ and $T_{-1}$ in ${\rm Add}T$ such that $\phi$ is the left $D_{\theta}\mbox{-}$approximation, then $T$ is a Gorenstein silting module.
\begin{proof}\  Since $T$ is a partial Gorenstein silting module with respect to $\theta$, it is clear that ${\rm Gen}_{G}(T)\subseteq D_{\theta}$. Let $X$ be an object in $D_{\theta}$. Since $Gp(R)$ is contravariantly finite, there is an $R$-module $E\in Gp(R)$ and a $G$-epimorphism $E\lxr X$. By assumption this epimorphism factors through the left $D_{\theta}\mbox{-}$approximation $\phi: E\lxr T_{0}$ via a $G$-epimorphism $g: T_{0}\lxr X$. Thus $X$ lies in ${\rm Gen}_{G}(T)$. This means that $T$ is a Gorenstein silting module.
\end{proof}
\end{prop}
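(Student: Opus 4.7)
The plan is to prove the two inclusions $\mathrm{Gen}_{G}(T)\subseteq D_{\theta}$ and $D_{\theta}\subseteq \mathrm{Gen}_{G}(T)$ separately, where only the second inclusion requires the new hypothesis about the left $D_{\theta}$-approximations.

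For the first inclusion, I would start from the assumption that $T$ is partial Gorenstein silting with respect to $\theta$. In particular, axiom (Gs2) says $T\in D_{\theta}$, and axiom (Gs1) tells us that $D_{\theta}$ is closed under coproducts, direct summands (as it is closed under $G$-epimorphic images of the split kind), and $G$-epimorphic images. Given $M\in \mathrm{Gen}_{G}(T)$, pick a $G$-exact presentation $T_{0}\to M\to 0$ with $T_{0}\in \mathrm{Add}\,T$. Since $T_{0}$ lies in $D_{\theta}$ by closure under coproducts (and summands) and $M$ is a $G$-epimorphic image of $T_{0}$, we conclude $M\in D_{\theta}$. This direction is mostly bookkeeping with the closure properties.

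For the second inclusion, let $X\in D_{\theta}$. Since $Gp(R)$ is contravariantly finite in $\mathrm{Mod}\,R$, there is a right $Gp(R)$-approximation $\varepsilon\colon E\to X$ with $E\in Gp(R)$; because $\mathrm{Gproj}$ contains the projectives (and any projective cover of $X$ factors through $\varepsilon$), $\varepsilon$ is in fact a $G$-epimorphism, i.e.\ ${\rm Hom}_{R}(G,\varepsilon)$ is surjective for every $G\in Gp(R)$. Now invoke the new hypothesis: there is a $G$-exact sequence $E\xrightarrow{\phi} T_{0}\to T_{-1}\to 0$ with $T_{0},T_{-1}\in \mathrm{Add}\,T$ and $\phi$ a left $D_{\theta}$-approximation. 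Because $X\in D_{\theta}$, the map $\varepsilon$ factors as $\varepsilon = g\circ\phi$ for some $g\colon T_{0}\to X$.

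The main (and really the only nontrivial) step is to verify that the factoring map $g$ is a $G$-epimorphism, for then $T_{0}\in \mathrm{Add}\,T$ produces the required $G$-exact presentation witnessing $X\in \mathrm{Gen}_{G}(T)$. This I would do by applying ${\rm Hom}_{R}(G,-)$ for an arbitrary $G\in Gp(R)$: the composite ${\rm Hom}_{R}(G,E)\xrightarrow{\phi_{\ast}}{\rm Hom}_{R}(G,T_{0})\xrightarrow{g_{\ast}}{\rm Hom}_{R}(G,X)$ equals ${\rm Hom}_{R}(G,\varepsilon)$, which is surjective since $\varepsilon$ is a $G$-epimorphism. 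Consequently $g_{\ast}$ is surjective, so $g$ is itself a $G$-epimorphism. This yields $X\in \mathrm{Gen}_{G}(T)$, proving $D_{\theta}\subseteq \mathrm{Gen}_{G}(T)$, and combined with the other inclusion gives $\mathrm{Gen}_{G}(T)=D_{\theta}$, which by Definition~\ref{Gsmodule} means $T$ is a Gorenstein silting module. The delicate point worth flagging is the existence of the $G$-epimorphism $E\to X$ from a Gorenstein-projective object; this uses contravariant finiteness of $Gp(R)$ together with the fact that ordinary projective resolutions are automatically $G$-exact, so it is no obstacle once unpacked.
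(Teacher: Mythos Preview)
Your proposal is correct and follows essentially the same route as the paper's proof: both establish $\mathrm{Gen}_G(T)\subseteq D_\theta$ from the partial silting axioms, then for the reverse inclusion cover $X\in D_\theta$ by a $G$-epimorphism from some $E\in Gp(R)$, factor through the left $D_\theta$-approximation $\phi$, and observe that the resulting map $g\colon T_0\to X$ is a $G$-epimorphism. You have simply spelled out in more detail the points the paper leaves as ``clear'' (the first inclusion) or asserts without justification (that $g$ is a $G$-epimorphism, which you verify by the factorization $\varepsilon_* = g_*\circ\phi_*$).
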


\vskip 10pt

\subsection{\bf Connection with Gorenstein tilting (resp.\ star) modules} \ In this subsection, we characterise the relations among Gorenstein silting modules, Gorenstein tilting modules and Gorenstein star modules.

\vskip 10pt

Throughout, let $R$ be a Noetherian ring such that $Gp(R)$  the contravariantly finite subcategory of ${\rm Mod}R$.

\vskip 10pt

\begin{defn}\ {\rm (\cite{AS1, G1, YLO})} \  An $R\mbox{-}$module $T$ is called a Gorenstein tilting module if $T^{G\perp}= {\rm Pres}_{G}(T)$, or equivalently, it satisfies the following three conditions:

\vskip 5pt

(T1) ${\rm Gpd}_{R}T\leq 1$.

\vskip 5pt

(T2) ${\rm Gext}^{1}_{R}(T, T^{(I)})= 0$ for  all sets $I$.

\vskip 5pt

(T3) For any $P\in Gp(R)$, there exists a $G\mbox{-}$exact sequence $0\lxr P\lxr T_{0}\lxr T_{-1}\lxr 0$ with each $T_{i}\in {\rm Add}T$.
\end{defn}

\vskip 10pt

\begin{defn}\ {\rm (\cite{Z})}\  An $R\mbox{-}$module $T$ is called a Gorenstein star module if

\vskip 5pt

{\rm (i)} Any $G\mbox{-}$exact sequence $0\lxr X\lxr T_{0}\lxr Y\lxr 0$ with $T_{0}\in {\rm Add}T$ and $X\in {\rm Gen}_{G}(T)$ is  ${\rm Hom}_{R}(T, -)\mbox{-}$exact.

\vskip 5pt

{\rm (ii)} ${\rm Gen}_{G}(T)={\rm Pres}_{G}T$.
\end{defn}

\vskip 10pt

\begin{lem}\
\label{Gorensteinstar}
{\rm (\cite[Proposition 2.9]{Z})}\ The following statements are equivalent for an $R\mbox{-}$module $T$:

\vskip 5pt

{\rm (i)} $T$ is a Gorenstein star module and ${\rm Gen}_{G}(T)$ is closed under $G\mbox{-}$extension.

\vskip 5pt

{\rm (ii)} ${\rm Gen}_{G}(T)= {\rm Pres}_{G}T\subseteq T^{G\perp}$.
\end{lem}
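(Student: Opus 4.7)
The plan is to prove the two implications separately, using the long exact sequences of ${\rm Gext}^{*}_{R}$ throughout and exploiting the fact that $T^{G\perp}$ is closed under coproducts and direct summands (so that ${\rm Gext}^{1}_{R}(T',X)=0$ for any $T'\in {\rm Add}\,T$ whenever $X\in T^{G\perp}$).

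For (ii) $\Rightarrow$ (i): I would first verify closure of ${\rm Gen}_{G}(T)$ under $G$-extensions. Given a $G$-exact sequence $0\to A\to B\to C\to 0$ with $A,C\in {\rm Gen}_{G}(T)$, choose a $G$-epimorphism $T_{0}\to C$ with $T_{0}\in {\rm Add}\,T$ and pull back to obtain a $G$-exact sequence $0\to A\to P\to T_{0}\to 0$. Since $A\in T^{G\perp}$ forces ${\rm Gext}^{1}_{R}(T_{0},A)=0$, the pullback splits and $P\cong A\oplus T_{0}$. The induced map $P\to B$ is a $G$-epimorphism, and precomposing with a $G$-epimorphism $T_{1}\to A$ from ${\rm Add}\,T$ yields $B\in {\rm Gen}_{G}(T)$. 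The identity ${\rm Gen}_{G}(T)={\rm Pres}_{G}T$ is assumed and is the Gorenstein star axiom (ii). For the star axiom (i), given a $G$-exact $0\to X\to T_{0}\to Y\to 0$ with $T_{0}\in {\rm Add}\,T$ and $X\in {\rm Gen}_{G}(T)\subseteq T^{G\perp}$, the vanishing ${\rm Gext}^{1}_{R}(T,X)=0$ yields surjectivity of ${\rm Hom}_{R}(T,T_{0})\to {\rm Hom}_{R}(T,Y)$ directly from the long exact sequence.

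For (i) $\Rightarrow$ (ii): Only ${\rm Pres}_{G}(T)\subseteq T^{G\perp}$ requires work. Take $M\in {\rm Pres}_{G}(T)$ and a $G$-exact extension $\xi\colon 0\to M\to N\stackrel{q}{\to} T\to 0$. Closure of ${\rm Gen}_{G}(T)$ under $G$-extensions forces $N\in {\rm Gen}_{G}(T)={\rm Pres}_{G}(T)$, so a $G$-exact presentation $T_{1}\to T_{0}\stackrel{p}{\to} N\to 0$ exists with $T_{i}\in {\rm Add}\,T$, and setting $K={\rm Im}(T_{1}\to T_{0})$ gives a $G$-exact $0\to K\to T_{0}\to N\to 0$ with $K\in {\rm Gen}_{G}(T)$. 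The composition $qp\colon T_{0}\to T$ is a $G$-epimorphism, and a Snake Lemma chase shows that its kernel sits in a $G$-exact sequence $0\to K\to\ker(qp)\to M\to 0$; closure of ${\rm Gen}_{G}(T)$ under $G$-extensions then forces $\ker(qp)\in {\rm Gen}_{G}(T)$. Applying the Gorenstein star axiom (i) to $0\to\ker(qp)\to T_{0}\to T\to 0$ makes ${\rm Hom}_{R}(T,T_{0})\to {\rm Hom}_{R}(T,T)$ surjective, producing $\phi\colon T\to T_{0}$ with $qp\phi={\rm id}_{T}$. Then $p\phi\colon T\to N$ is a section of $q$, so $\xi$ splits and ${\rm Gext}^{1}_{R}(T,M)=0$.

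The main obstacle is the Snake-Lemma step in the (i) $\Rightarrow$ (ii) direction: one must verify not merely ordinary exactness but the $G$-exactness of $0\to K\to\ker(qp)\to M\to 0$, which requires a careful diagram chase through the defining property (preservation of the sequence under ${\rm Hom}_{R}(G,-)$ for every $G\in Gp(R)$). Once this technical point is secured, everything else reduces to routine manipulation of the long exact sequence of ${\rm Gext}^{*}_{R}$ together with the pullback-splitting argument via $T^{G\perp}$.
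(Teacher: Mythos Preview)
The paper does not supply its own proof of this lemma; it is simply quoted from \cite[Proposition~2.9]{Z}. So there is no in-paper argument to compare against, and your task reduces to whether your proof stands on its own.

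Your argument is essentially correct. Both implications are handled cleanly: the pullback-splitting argument for closure under $G$-extensions in (ii)$\Rightarrow$(i) is standard and works once you note that pullbacks along $G$-epimorphisms preserve $G$-exactness (which follows because ${\rm Hom}_{R}(G,-)$ is left exact and hence preserves pullbacks); and in (i)$\Rightarrow$(ii) your Snake-Lemma step is in fact easy to justify---given $f\colon G\to M$, lift $G\to M\hookrightarrow N$ through the $G$-epimorphism $p\colon T_{0}\to N$ to get $\tilde f\colon G\to T_{0}$, observe $qp\tilde f=0$, so $\tilde f$ lands in $\ker(qp)$ and maps onto $f$. That is the only point you flagged as delicate, and it goes through without difficulty.

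One small wording issue: your opening sentence says ``$T^{G\perp}$ is closed under coproducts and direct summands (so that ${\rm Gext}^{1}_{R}(T',X)=0$ for any $T'\in {\rm Add}\,T$ whenever $X\in T^{G\perp}$).'' The closure properties you name concern the \emph{second} variable, whereas the parenthetical conclusion concerns the \emph{first}; the conclusion is true, but it follows from additivity of ${\rm Gext}^{1}_{R}(-,X)$ rather than from closure of $T^{G\perp}$. This is purely cosmetic and does not affect the validity of the proof.
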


\vskip 10pt

\begin{prop}\
\label{relation}
The following hold.
\begin{enumerate}

\item \ Each Gorenstein tilting $R$-module is  Gorenstein silting.

\vskip 5pt

\item \ Each Gorenstein silting $R$-module is a Gorenstein star module.
\end{enumerate}
\begin{proof}\ (1) Let $T$ be a Gorenstein tilting $R$-module. Then by definition there exists a $G$-exact sequence as follows:
$$0\lxr G_{1}\stackrel{\theta}{\lxr} G_{0}\lxr T\lxr 0.$$

Let $X\in {\rm Gen}_{G}(T)$. Then there exists an $G$-epimorphism $g: T^{(I)}\lxr X$ such that
${\rm Hom}_{R}(G, g)$ is $G$-epic. Since ${\rm Gpd}_{R}T\leq 1$ and  ${\rm Gext}^{1}_{R}(T, T^{(I)})= 0$, applying the functor ${\rm Hom}_{R}(T, -)$ to $g$, it follows that ${\rm Gext}^{1}_{R}(T, X)= 0$, and hence $X\in D_{\theta}$.

\vskip 5pt

Let $X\in D_{\theta}$. Then by Lemma~\ref{silting} $X\in T^{G\perp}$. It follows from the definition $T^{G\perp}={\rm Pres}_{G}(T)$ that $X\in {\rm Pres}_{G}(T)$. This implies that $X\in
{\rm Gen}_{G}(T)$. Therefore, we prove that ${\rm Gen}_{G}(T)=D_{\theta}$, and $T$ is a Gorenstein silting module with respect to $\theta$.

\vskip 10pt

(2) Let $T$ be a Gorenstein silting $R$-module with the Gorenstein-projective presentation $\theta: G_{1}\to G_{0}$. Let $X\in {\rm Gen}_{G}(T)$. Then there is the $G$-epimorphic universal map $u: T^{(I)}\lxr X$ for some index set $I$. We will show that $K:= {\rm Ker}u$ lies in $D_{\theta}= {\rm Gen}_{G}(T)$. Pick $f: G_{1}\lxr K$. Since $T^{(I)}$ lies in $D_{\theta}$, we get the following commutative diagram of exact rows
\[
\xymatrix{
 &  G_{1}\ar[r]^{\theta}\ar[d]_{f} &  G_{0}\ar[r]^{\pi}\ar[d]_{g} & T\ar[r]^{}\ar[d]_{h} & 0 \\
0\ar[r]^{} & K\ar[r]^{v} &  T^{(I)}\ar[r]^{u} & X\ar[r]^{} & 0. }
\]
By the universality of $u$, there is the morphism $h': T\lxr  T^{(I)}$ such that $h= uh'$. Then by $ug=h\pi$ we have $u(g-h' \pi)=0$. Hence there is a map $g': G_{0}\lxr K$ such that $g-h' \pi= vg'$, and moreover,  we get from $vf=g\theta$ that $f=g'\theta$. This implies that $K\in D_{\theta}={\rm Gen}_{G}(T)$, and so ${\rm Gen}_{G}(T)\subseteq {\rm Pres}_{G}(T)$. This means that ${\rm Pres}_{G}(T)= {\rm Gen}_{G}(T)\subseteq T^{G\perp }$. Thus  Lemma ~\ref{Gorensteinstar} shows that $T$ is a Gorenstein star module.
\end{proof}
\end{prop}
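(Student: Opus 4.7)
The plan is to prove the two parts separately, leveraging Lemma~\ref{silting} and Lemma~\ref{Gorensteinstar}. For part (1), since $T$ is Gorenstein tilting I start from (T1) to produce a $G$-exact proper Gorenstein-projective presentation $0 \to G_1 \xrightarrow{\theta} G_0 \to T \to 0$, which will serve as the witness for $T$ being Gorenstein silting. The task is then to verify ${\rm Gen}_G(T) = D_\theta$. The inclusion $\subseteq$ should follow by taking $X \in {\rm Gen}_G(T)$, choosing a $G$-epimorphism $T^{(I)} \to X$ with kernel $K$, and combining the long exact Gext-sequence with (T1) (vanishing of ${\rm Gext}^2_R(T,-)$) and (T2) to deduce ${\rm Gext}^1_R(T, X) = 0$; applying ${\rm Hom}_R(-, X)$ to the presentation $\theta$ then translates this vanishing into surjectivity of ${\rm Hom}_R(\theta, X)$, i.e., $X \in D_\theta$. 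The reverse inclusion comes from composing Lemma~\ref{silting}(ii) with the defining identity $T^{G\perp} = {\rm Pres}_G(T)$ of a Gorenstein tilting module.

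For part (2), I would invoke Lemma~\ref{Gorensteinstar} to reduce the problem to showing ${\rm Gen}_G(T) = {\rm Pres}_G(T) \subseteq T^{G\perp}$. The containment in $T^{G\perp}$ is immediate from ${\rm Gen}_G(T) = D_\theta$ together with Lemma~\ref{silting}(ii). The only nontrivial direction in the equality is ${\rm Gen}_G(T) \subseteq {\rm Pres}_G(T)$: given $X \in {\rm Gen}_G(T)$, I cover it by a universal $G$-epimorphism $u : T^{(I)} \to X$ (taking $I$ to index all $G$-epic maps from copies of $T$ to $X$) and then show that $K := \ker u$ itself belongs to ${\rm Gen}_G(T) = D_\theta$. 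A $G$-epi from a direct sum of copies of $T$ onto $K$ then supplies the missing second step of a $G$-exact sequence $T^{(J)} \to T^{(I)} \to X \to 0$, placing $X$ in ${\rm Pres}_G(T)$.

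The main obstacle will be establishing $K \in D_\theta$ by a diagram chase. Starting from an arbitrary $f : G_1 \to K$, one uses that $T^{(I)} \in D_\theta$ to lift the composition of $f$ with the inclusion $v : K \hookrightarrow T^{(I)}$ to some $g : G_0 \to T^{(I)}$; since $u g \theta = u v f = 0$, the map $ug$ factors through the cokernel $\pi : G_0 \to T$ of $\theta$ as $h \pi$, and the universality of $u$ lifts $h$ to some $h' : T \to T^{(I)}$. The corrected map $g - h'\pi$ then lands in $K$ and produces $g' : G_0 \to K$ satisfying $g'\theta = f$, so that $K \in D_\theta$. Once this is in hand, (2) closes formally, while (1) collapses to manipulating a single Gext long exact sequence together with Lemma~\ref{silting}(ii).
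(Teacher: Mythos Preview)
Your proposal is correct and follows essentially the same route as the paper's proof: part (1) uses the short Gorenstein-projective resolution from (T1), combines (T2) and the Gext long exact sequence to show ${\rm Gen}_G(T)\subseteq D_\theta$, and invokes Lemma~\ref{silting}(ii) together with $T^{G\perp}={\rm Pres}_G(T)$ for the reverse inclusion; part (2) reduces via Lemma~\ref{Gorensteinstar} and performs the identical diagram chase on the universal map $u$ to show $\ker u\in D_\theta$. One small wording issue: your parenthetical ``taking $I$ to index all $G$-epic maps from copies of $T$ to $X$'' should read ``taking $I={\rm Hom}_R(T,X)$'' (all maps, not just $G$-epic ones), since the lifting step needs every $h:T\to X$ to factor through $u$; this is exactly the ``$G$-epimorphic universal map'' the paper uses.
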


\vskip 10pt

Recall that a ring  $R$ is Gorenstein if $R$  is two-sided Noetherian and has finite injective dimension, both as left and right $R\mbox{-}$module.

\vskip 10pt

\begin{thm}\
\label{star}
Let $R$ be a 1-Gorenstein ring and $T$ an $R$-module. Then the following are equivalent.
\begin{enumerate}

\item \ $T$ is a Gorenstein tilting module.

\vskip 5pt

\item \ $T$ is a Gorenstein silting module.

\vskip 5pt

\item \ $T$ is a Gorenstein star module and $Gi(R)\subset{\rm Pres}_{G}(T)$.
\end{enumerate}
\begin{proof}\ First by \cite[Theorem 3.2]{Z} we know that (1)$\Longleftrightarrow$ (3). The theorem immediately follows from
Proposition~\ref{relation}.
\end{proof}
\end{thm}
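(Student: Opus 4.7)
The plan is to form the cycle $(1) \Rightarrow (2) \Rightarrow (3) \Rightarrow (1)$ by combining Proposition~\ref{relation} with Zhang's Theorem~3.2 of [Z]. The implication $(1) \Rightarrow (2)$ is exactly Proposition~\ref{relation}(1), while $(3) \Rightarrow (1)$ is one half of Zhang's characterisation of Gorenstein tilting modules as Gorenstein star modules satisfying $Gi(R) \subset {\rm Pres}_G(T)$. Thus the only genuine new content is the implication $(2) \Rightarrow (3)$, and the 1-Gorenstein hypothesis is expected to intervene exactly there.

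For $(2) \Rightarrow (3)$, Proposition~\ref{relation}(2) already delivers the first half of condition~(3): every Gorenstein silting $T$ is a Gorenstein star module. The remaining task is to establish $Gi(R) \subset {\rm Pres}_G(T)$. Reading off from the proof of Proposition~\ref{relation}(2) the identity ${\rm Pres}_G(T) = {\rm Gen}_G(T) = D_\theta$, the problem reduces to $Gi(R) \subset D_\theta$. Given $X \in Gi(R)$ and a morphism $f : G_1 \to X$, decomposing $\theta = i\pi$ exactly as in the proof of Lemma~\ref{silting} and applying ${\rm Hom}_R(-, X)$ to the short $G$-exact sequence $0 \to {\rm Im}\,\theta \to G_0 \to T \to 0$ shows that the desired factorization of $f$ through $\theta$ is equivalent to the vanishing ${\rm Gext}^1_R(T, X) = 0$.

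The main obstacle is precisely this vanishing, and it is where the hypothesis that $R$ is 1-Gorenstein is used essentially. The strategy is to exploit that ${\rm Gdim}\,R \le 1$ forces ${\rm Gpd}\,T \le 1$, so that $T$ already admits a two-term proper Gorenstein projective resolution, and to combine this with the fact that any $X \in Gi(R)$ over a 1-Gorenstein ring fits into a short injective coresolution $0 \to X \to I^0 \to I^1 \to 0$. Using the standard orthogonality ${\rm Gext}^{\ge 1}_R(-, I) = 0$ for injective $I$ and dimension shifting against this coresolution yields ${\rm Gext}^1_R(T, X) = 0$, completing the inclusion $Gi(R) \subset D_\theta$, hence the implication $(2) \Rightarrow (3)$. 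Together with $(1) \Rightarrow (2)$ from Proposition~\ref{relation}(1) and $(3) \Rightarrow (1)$ from [Z, Theorem~3.2], the cycle closes and the equivalence is established.
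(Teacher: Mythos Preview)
Your overall architecture matches the paper's: use \cite[Theorem~3.2]{Z} for $(1)\Leftrightarrow(3)$, Proposition~\ref{relation}(1) for $(1)\Rightarrow(2)$, and Proposition~\ref{relation}(2) for the Gorenstein star half of $(2)\Rightarrow(3)$. You are also right that the remaining content is the inclusion $Gi(R)\subset{\rm Pres}_G(T)$, which the paper's one-line proof leaves implicit.

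However, your argument for this inclusion contains a genuine error. The assertion that every $X\in Gi(R)$ over a $1$-Gorenstein ring admits a short injective coresolution $0\to X\to I^0\to I^1\to 0$ is false: being $1$-Gorenstein bounds the injective dimension of $R$, not of arbitrary modules. For a concrete counterexample take $R=k[x]/(x^2)$, which is self-injective (hence $1$-Gorenstein); every module is Gorenstein injective, yet the simple module $k$ has infinite injective dimension. Thus the dimension-shifting step never gets off the ground. There is also a gap in your reduction step: following the decomposition $\theta=i\pi$ of Lemma~\ref{silting}, the vanishing ${\rm Gext}^1_R(T,X)=0$ only guarantees that every map ${\rm Im}\,\theta\to X$ extends to $G_0$; it does \emph{not} force an arbitrary $f\colon G_1\to X$ to factor through $\pi$ (this would require $f$ to vanish on ${\rm Ker}\,\theta$, which is not automatic unless $\theta$ is already a monomorphism). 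So what you have really shown is $D_\theta\subset T^{G\perp}$, which is Lemma~\ref{silting}(ii), not the inclusion you need.
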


\vskip 10pt

We finish this section with an important class of examples of (partial) Gorenstein silting modules: $\tau_{G}\mbox{-}$rigid modules over a finite dimensional $k$-algebra, where $\tau_{G}$ was introduced in \cite{G2} for an algebra of finite CM-type.

\vskip 10pt

\subsection{\bf Relative rigid modules over algebras of finite CM-type}\ From now on, let $A$ be a finite dimensional  $k\mbox{-}$algebra of finite CM-type over a field $k$ and ${\rm mod}A$ the category of finitely generated left $A$-modules. Use the notation in the introduction.
Denote by ${\rm A(Gproj)}\mbox{-}{\rm mod}$ the category of finitely generated right ${\rm A(Gproj)}\mbox{-}$modules and ${\rm A(Gproj)}\mbox{-}\underline{{\rm mod}}$ the stable category of ${\rm A(Gproj)}\mbox{-}{\rm mod}$ modulo projective ${\rm A(Gproj)}\mbox{-}$modules. Let $D: {\rm A(Gproj)}\mbox{-}{\rm mod}\lxr {\rm mod}{\rm A(Gproj)}$ be the duality. For Simplicity, we denote the functors ${\rm Hom}_{A}(-,-)$ by $(-,-)$ and ${\rm Hom}_{{\rm A(Gproj)}}(-,-)$ by $_{{\rm A(Gproj)}}(-,-)$.

\vskip 10pt

Let $M$ be an $A\mbox{-}$module in ${\rm mod}A$. Then there is a minimal Gorenstein-projective presentation
$G_{1}\lxr G_{0}\lxr M\lxr 0$ of $M$. This induces  the following exact sequences
$$0\lxr (M, E)\lxr (G_{0}, E)\lxr (G_{1}, E)\lxr {\rm Tr}_{G}M\lxr 0$$
and
$$0\lxr \tau_{G}M\lxr D(G_{1}, E)\lxr D(G_{0}, E),$$
with ${\rm Tr}_{G}M\in {\rm A(Gproj)}\mbox{-}{\rm mod}$ and $\tau_{G}M\in {\rm mod}{\rm A(Gproj)}$.
Recall from \cite{G2} that $${\rm Tr}_{G}: {\rm mod}A/Gp(A)\lxr {\rm A(Gproj)}\mbox{-}\underline{{\rm mod}}$$
is called the relative transpose  of $A$, and moreover, ${\rm Tr}_{G}$ is a faithful functor.

\vskip 10pt

\begin{defn}\
\label{rigid}
We say that an $A\mbox{-}$module $M$ is {\bf $\tau_{G}\mbox{-}$rigid} if ${\rm Hom}_{{\rm A(Gproj)}}((E,M), \tau_{G}M)=0$ for the left ${\rm A(Gproj)}\mbox{-}$module ${\rm Hom}_{A}(E,M)$.
\end{defn}

\vskip 10pt

Next we provide some properties for $\tau_{G}\mbox{-}$rigid module. For an $A\mbox{-}$module $M$, put
$$^{\perp_{0}}(\underline{\tau_{G}M}):= \{X\in {\rm mod}{\rm A(Gproj)} \mid \overline{{\rm Hom}}_{{\rm A(Gproj)}}(X, \tau_{G}M)=0 \}.$$

\vskip 10pt

\begin{prop}\
\label{properties}
Let $M$ be a finitely generated $A\mbox{-}$module and $$G_{1}\stackrel{f}{\lxr} G_{0}\lxr M\lxr 0  \eqno(*)$$ the minimal proper Gorenstein-projective presentation of $M$. Then the following statements hold.
\begin{enumerate}

\item $M$ is $\tau_{G}\mbox{-}$rigid if and only if ${\rm Hom}_{A}(f, M)$ is an epimorphism.

\vskip 5pt

\item $M$ is $\tau_{G}\mbox{-}$rigid if and only if ${\rm Hom}_{A}(E, M)$ is a $\tau\mbox{-}$rigid module, where $\tau$ is the Auslander-Reiten translation of ${\rm A(Gproj)}$.

\vskip 5pt

\item $\tau_{G}M\cong \tau(E, M).$

\vskip 5pt

\item Suppose that $M$ is $\tau_{G}\mbox{-}$rigid. Then ${\rm Hom}_{A}(E, {\rm Gen}_{G}M)\subseteq {^{\perp_{0}}(\underline{\tau_{G}M})}$.
\end{enumerate}
\begin{proof}\ There is an exact sequence
$$0\lxr \tau_{G}M\lxr D(G_{1}, E)\lxr D(G_{0}, E).\eqno(**)$$
Applying ${\rm Hom}_{{\rm A(Gproj)}}(N,-)$ for any ${\rm A(Gproj)}\mbox{-}$module $N$,  we have the following commutative diagram of exact sequences:
\[
\xymatrix@C=15pt{
0\ar[r]^{} & _{{\rm A(Gproj)}}(N, \tau_{G}M)\ar[r]^{} & _{{\rm A(Gproj)}}(N, D(G_{1}, E))\ar[r]^{}\ar[ld]_{\cong} & _{{\rm A(Gproj)}}(N, D(G_{0}, E))\ar[ld]_{\cong}  &  \\
& D_{{\rm A(Gproj)}}((E, G_{1}), N)\ar[r]^{} & D_{{\rm A(Gproj)}}((E, G_{0}), N)\ar[r]^{} & D_{{\rm A(Gproj)}}((E, M), N)\ar[r]^{} & 0. }
\]
Then we get that ${\rm Hom}_{{\rm A(Gproj)}}(N, \tau_{G}M)=0$ if and only if the map
$${\rm Hom}_{{\rm A(Gproj)}}((E, G_{0}), N)\lxr {\rm Hom}_{{\rm A(Gproj)}}((E, G_{1}), N)$$
is an epimorphism.

\vskip 5pt

(1) and (2) Applying ${\rm Hom}_{A}(E, -)$ the $(*)$, we can get the projective resolution of $(E, M)$:
$$(E, G_{1})\stackrel{\sigma}{\lxr} (E, G_{0})\lxr (E, M)\lxr 0.$$
Then we have from above arguments that ${\rm Hom}_{{\rm A(Gproj)}}((E,M), \tau_{G}M)=0$ if and only if ${\rm Hom}_{{\rm A(Gproj)}}(\sigma,$ $ (E,M))$ is epic if and only if ${\rm Hom}_{A}(f, M)$ is epic. Notice that the statement that ${\rm Hom}_{{\rm A(Gproj)}}(\sigma, (E,M))$ is epic means that ${\rm Hom}_{A}(E,M)$ is a partial silting ${\rm A(Gproj)}\mbox{-}$module, equivalently, ${\rm Hom}_{A}(E,M)$ is $\tau\mbox{-}$rigid.

\vskip 5pt

(3) Consider the exact sequence
$$0\lxr \tau(E, M)\lxr  D_{{\rm A(Gproj)}}((E, G_{1}), {\rm A(Gproj)})\lxr D_{{\rm A(Gproj)}}((E, G_{0}), {\rm A(Gproj)}).$$
Then  we get from  above arguments that $\tau_{G}M\cong {\rm Hom}_{{\rm A(Gproj)}}({\rm A(Gproj)}, \tau_{G}M) \cong \tau(E, M)$.

\vskip 5pt

(4) We have proved in (2) that ${\rm Hom}_{A}(E, M)$ is a $\tau\mbox{-}$rigid module. If $Y\in {\rm Gen}_{G}M$, then there is the $G$-exact sequence $M^{(I)}\lxr Y\lxr 0$ for some index set $I$.
Therefore we have that $(E, {\rm Gen}_{G}M)\subseteq {\rm Gen}(E, M) \subseteq (E, M)^{\perp_{1}}$. Since there is the following isomorphisms
$$\begin{aligned}
{\rm Ext}_{{\rm A(Gproj)}}^{1}((E,M),\ X)&\cong D\underline{{\rm Hom}}_{{\rm A(Gproj)}}(\tau^{-1}X,\ (E,M))\\
&\cong D\overline{{\rm Hom}}_{{\rm A(Gproj)}}(X,\ \tau(E,M))\\
&\cong D\overline{{\rm Hom}}_{{\rm A(Gproj)}}(X,\ \tau_{G}M),
\end{aligned}$$
it follows that $X\in (E, M)^{\perp_{1}}$ if and only if $X\in {^{\perp_{0}}(\underline{\tau_{G}M})}$. Hence ${\rm Hom}_{A}(E, {\rm Gen}_{G}M)$ $\subseteq {^{\perp_{0}}(\underline{\tau_{G}M})}$.
\end{proof}
\end{prop}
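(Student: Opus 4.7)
Essentially all four items come from one master identification. Applying ${\rm Hom}_{{\rm A(Gproj)}}(N,-)$ to the defining sequence $0\lxr \tau_{G}M\lxr D(G_{1},E)\lxr D(G_{0},E)$ and using the standard $k$-duality
\[
{\rm Hom}_{{\rm A(Gproj)}}(N,\,D(G_{i},E)) \;\cong\; D\bigl({\rm Hom}_{{\rm A(Gproj)}}((E,G_{i}),N)\bigr)
\]
turns the vanishing ${\rm Hom}_{{\rm A(Gproj)}}(N,\tau_{G}M)=0$ into surjectivity of the induced map
\[
{\rm Hom}_{{\rm A(Gproj)}}((E,G_{0}),N) \lxr {\rm Hom}_{{\rm A(Gproj)}}((E,G_{1}),N).
\]
The strategy is then to specialise $N$ to get each assertion: $N=(E,M)$ handles (1) and (2), $N={\rm A(Gproj)}$ handles (3), and (4) falls out by combining (2), (3) and the Auslander--Reiten formula over the CM-Auslander algebra.

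\textbf{Parts (1)--(3).} The functor $(E,-)\colon \mathrm{add}\,E\to \mathrm{proj}\,{\rm A(Gproj)}$ is an equivalence, and under it ${\rm Hom}_{{\rm A(Gproj)}}((E,G_{i}),(E,M))\cong {\rm Hom}_{A}(G_{i},M)$ with the map induced by the master diagram identified with ${\rm Hom}_{A}(f,M)$; taking $N=(E,M)$ therefore gives (1). For (2), applying $(E,-)$ to $(*)$ produces the minimal projective presentation $(E,G_{1})\stackrel{\sigma}{\lxr}(E,G_{0})\lxr(E,M)\lxr 0$, and the classical Adachi--Iyama--Reiten criterion states that $(E,M)$ is $\tau$-rigid iff ${\rm Hom}_{{\rm A(Gproj)}}(\sigma,(E,M))$ is epic; the master diagram with $N=(E,M)$ identifies this with the condition in (1). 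For (3), set $N={\rm A(Gproj)}$: on one side ${\rm Hom}_{{\rm A(Gproj)}}({\rm A(Gproj)},\tau_{G}M)\cong \tau_{G}M$, while on the other the master diagram recovers exactly the defining four-term sequence of $\tau(E,M)$ over ${\rm A(Gproj)}$, forcing $\tau_{G}M\cong\tau(E,M)$.

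\textbf{Part (4) and main obstacle.} Given $Y\in {\rm Gen}_{G}M$, a $G$-epimorphism $M^{(I)}\twoheadrightarrow Y$ remains epic after applying $(E,-)$ (which is exact on $G$-exact sequences), so $(E,Y)\in \mathrm{Gen}((E,M))$. Because $(E,M)$ is $\tau$-rigid by (2), a short computation---any map $(E,M)\lxr \tau(E,M)$ vanishes, so composition with the epi forces the same vanishing out of $(E,Y)$ modulo injective factors---gives $\mathrm{Gen}((E,M))\subseteq (E,M)^{\perp_{1}}$. The Auslander--Reiten formula
\[
\mathrm{Ext}^{1}_{{\rm A(Gproj)}}((E,M),X)\cong D\overline{{\rm Hom}}_{{\rm A(Gproj)}}(X,\tau(E,M))
\]
together with (3) then converts $(E,Y)\in (E,M)^{\perp_{1}}$ into the desired $(E,Y)\in{}^{\perp_{0}}(\underline{\tau_{G}M})$. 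The obstacle I expect to spend the most time on is purely bookkeeping: pinning down the left/right ${\rm A(Gproj)}$-module conventions in the initial duality, confirming that the arrow produced in the master diagram runs in the advertised direction, and checking that $(E,-)$ preserves minimality of presentations so that the $\tau$-rigidity criterion is applicable. Once the master square is reliably in place, each of (1)--(4) is little more than Yoneda and one AR-formula application.
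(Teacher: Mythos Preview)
Your proposal is correct and follows essentially the same route as the paper: both set up the ``master'' diagram by applying ${\rm Hom}_{{\rm A(Gproj)}}(N,-)$ to the defining sequence of $\tau_{G}M$ together with the duality ${\rm Hom}_{{\rm A(Gproj)}}(N,D(G_{i},E))\cong D{\rm Hom}_{{\rm A(Gproj)}}((E,G_{i}),N)$, then specialise $N=(E,M)$ for (1)--(2), $N={\rm A(Gproj)}$ for (3), and deduce (4) from (2), (3) and the Auslander--Reiten formula over ${\rm A(Gproj)}$. The bookkeeping concerns you raise (left/right conventions, minimality under $(E,-)$) are exactly the points the paper leaves implicit, so your caution there is appropriate but no new idea is needed.
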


\vskip 10pt

\begin{thm}\
\label{equivalence}
Let $A$ be a finite dimensional algebra of finite CM-type and $M$ an $A$-module in ${\rm mod}A$. Then $M$ is $\tau_{G}$-rigid if and only if $M$ is  a partial Gorenstein silting $A$-module.
\begin{proof}\ Let $G_{1}\stackrel{\theta}{\lxr} G_{0}\lxr M\lxr 0$ be the minimal proper Gorenstein-projective presentation of $M$. Since
$G_{0}$ and $G_{1}$ are finitely generated, it follows that $D_{\theta}$ is closed under coproducts. Then we get from Lemma~\ref{silting}
that $D_{\theta}$ is a torsion class. On the other hand, we get from Proposition~\ref{properties}(1) that $M$ is $\tau_{G}$-rigid if and only if
${\rm Hom}_{A}(\theta, M)$ is surjective. The latter implies that $M\in D_{\theta}$. Thus $M$ is $\tau_{G}$-rigid if and only if $M$ is  partial Gorenstein silting.
\end{proof}
\end{thm}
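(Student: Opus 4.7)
The plan is to take the minimal proper Gorenstein-projective presentation $\theta: G_{1} \lxr G_{0} \lxr M \lxr 0$ (which exists because $A$ has finite CM-type, so ${\rm Gproj}A$ is contravariantly finite in ${\rm mod}A$) as the candidate proper presentation witnessing that $M$ is partial Gorenstein silting, and then read off the two conditions (Gs1) and (Gs2) from results already established in the paper.

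For (Gs1): Lemma~\ref{silting}(i) already guarantees that $D_{\theta}$ is closed under $G$-epimorphic images, $G$-extensions, and direct products. What remains is closure under arbitrary coproducts. Here I would use that $G_{0}$ and $G_{1}$ lie in ${\rm mod}A$, so the representable functors ${\rm Hom}_{A}(G_{i},-)$ commute with direct sums; hence ${\rm Hom}_{A}(\theta, \bigoplus_{j} X_{j}) \cong \bigoplus_{j} {\rm Hom}_{A}(\theta, X_{j})$ is surjective as soon as each summand is. Thus (Gs1) holds with no hypothesis on $M$, and $D_{\theta}$ is always a relative torsion class.

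Given (Gs1), the property of $M$ being partial Gorenstein silting with respect to $\theta$ collapses to (Gs2), namely $M \in D_{\theta}$, which is just the surjectivity of ${\rm Hom}_{A}(\theta, M)$. By Proposition~\ref{properties}(1), this is precisely the condition that $M$ is $\tau_{G}$-rigid, closing both implications simultaneously.

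I do not expect a substantive obstacle: once Proposition~\ref{properties}(1) is in hand, the equivalence is essentially formal. The one point to double-check is the mild asymmetry between the two definitions — partial Gorenstein silting asks for \emph{some} proper Gorenstein-projective presentation satisfying (Gs1)--(Gs2), whereas $\tau_{G}$-rigidity is formulated through the \emph{minimal} one. This is resolved by observing that the minimal $\theta$ is itself a proper Gorenstein-projective presentation, so it serves as a uniform witness in both directions of the equivalence.
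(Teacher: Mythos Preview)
Your proposal is correct and follows essentially the same path as the paper's proof: both fix the minimal proper Gorenstein-projective presentation $\theta$, invoke Lemma~\ref{silting}(i) together with finite generation of $G_{0},G_{1}$ to establish (Gs1), and then reduce (Gs2) to the surjectivity of ${\rm Hom}_{A}(\theta,M)$ via Proposition~\ref{properties}(1). Your explicit remark on the asymmetry between ``some presentation'' in the definition of partial Gorenstein silting and ``the minimal one'' in the definition of $\tau_{G}$-rigidity goes slightly beyond what the paper writes, though the core argument is identical.
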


\vskip 10pt

Recall from \cite{DIJ} that an algebra $A$ is called $\tau\mbox{-}$tilting finite if it admits finite number of isomorphism classes of indecomposable $\tau\mbox{-}$rigid modules.

\vskip 10pt

\begin{defn} \ An algebra $A$ of finite CM-type is called $\tau_{G}\mbox{-}$tilting finite if it admits finite number of isomorphism classes of indecomposable $\tau_{G}\mbox{-}$rigid modules.
\end{defn}

\vskip 10pt

\begin{cor}\ Let $A$ be an Artin algebra of finite CM-type and ${\rm A(Gproj)}$ the CM-Auslander algebra. If ${\rm A(Gproj)}$ is $\tau\mbox{-}$tilting finite, then $A$ is $\tau_{G}\mbox{-}$tilting finite, and further has finite number of isomorphism classes of indecomposable partial Gorenstein silting modules.
\begin{proof} \ By Proposition~\ref{properties}(1), we know that an $A$-module $M$ is $\tau_{G}\mbox{-}$rigid if and only if ${\rm Hom}_{A}(E, M)$ is a $\tau\mbox{-}$rigid module. This implies the result by Theorem~\ref{equivalence}.
\end{proof}
\end{cor}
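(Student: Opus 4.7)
The strategy is to transfer $\tau$-tilting finiteness of ${\rm A(Gproj)}$ to $\tau_G$-tilting finiteness of $A$ along the functor $(E,-) = {\rm Hom}_A(E,-)$, exploiting the equivalence in Proposition~\ref{properties}(2). The conclusion about partial Gorenstein silting modules then follows mechanically from Theorem~\ref{equivalence}.

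First, I would record the standard equivalence of additive categories $(E,-): {\rm Gproj}A \xrightarrow{\sim} {\rm proj}\,{\rm A(Gproj)}$, valid because $E$ is an additive generator of ${\rm Gproj}A$ with ${\rm End}_A(E)^{op} = {\rm A(Gproj)}$. Applied to a minimal proper Gorenstein-projective presentation $G_1 \lxr G_0 \lxr M \lxr 0$, this produces a minimal projective presentation $(E,G_1) \lxr (E,G_0) \lxr (E,M) \lxr 0$ of the ${\rm A(Gproj)}$-module $(E,M)$. Since a finitely generated module over an artin algebra is determined up to isomorphism by its minimal projective presentation, $M$ is recoverable (up to iso) from $(E,M)$, and any direct sum decomposition of $(E,M)$ lifts through the equivalence to a decomposition of $M$. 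Hence $M$ is indecomposable if and only if $(E,M)$ is indecomposable, and the assignment $M \mapsto (E,M)$ is injective on iso classes.

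Combining this with Proposition~\ref{properties}(2), which says $M$ is $\tau_G$-rigid if and only if $(E,M)$ is $\tau$-rigid, yields an injection from iso classes of indecomposable $\tau_G$-rigid $A$-modules into iso classes of indecomposable $\tau$-rigid ${\rm A(Gproj)}$-modules. Finiteness of the latter set, which is the hypothesis, therefore forces finiteness of the former, i.e., $A$ is $\tau_G$-tilting finite. The statement about indecomposable partial Gorenstein silting $A$-modules is then immediate from Theorem~\ref{equivalence}, which identifies them with $\tau_G$-rigid modules.

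The main obstacle is the injectivity and indecomposability-preserving property of $M \mapsto (E,M)$, which is glossed over in the one-line argument. It hinges on the uniqueness (up to isomorphism) of minimal proper Gorenstein-projective presentations together with the equivalence on ${\rm Gproj}A$ above; under the finite CM-type hypothesis both are standard, so the obstacle is expository rather than mathematical.
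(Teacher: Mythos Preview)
Your proposal is correct and follows essentially the same route as the paper's proof: both use the characterisation of $\tau_G$-rigidity via $\tau$-rigidity of $(E,M)$ from Proposition~\ref{properties} together with Theorem~\ref{equivalence}. Your version is simply more complete, since you spell out why the assignment $M\mapsto (E,M)$ is injective on isomorphism classes and reflects indecomposability---a point the paper leaves implicit; note also that the statement you invoke is item~(2) of Proposition~\ref{properties}, whereas the paper's citation of item~(1) appears to be a misprint.
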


\vskip 10pt

\section{\bf 2-term Gorenstein silting complexes}

\vskip 5pt

In this section, we introduce the notion of the 2-term Gorenstein silting complex, which is in bijection  with the Gorenstein silting module.
Then we characterise it by the connection with the t-structure and torsion pair. We also characterise the global dimension of endomorphism algebras of 2-term Gorenstein silting complexes over an algebra $A$ by terms of the Gorenstein dimension of $A$. Throughout, we denote by $A$ a finite dimensional $k$-algebra over a field $k$.

\vskip 10pt

\subsection{2-term Gorenstein silting complexes}\ In this subsection, we  introduce the definition of 2-term Gorenstein silting complexes, and show the links with t-structures and torsion pairs. The Brenner-Butler theorem  is given.
\vskip 10pt

\begin{defn}\
\label{Gscomplex}
Let  $G^{\bullet}: G_{1}\stackrel{d^{1}}{\lxr} G_{0}$ be a complex in $D_{gp}^{b}(A)$ with $G_{i}\in {\rm Gproj}A$ for $i=0,1$. We say that $G^{\bullet}$ is

\vskip 5pt

$\bullet$ \ {\bf 2-term partial Gorenstein silting} if it satisfies the following two conditions:

\vskip 5pt

{\rm (i)} $G_{1}\lxr {\rm Im}d^{1}$ and $G_{0}\lxr {\rm Coker}d^{1}$ are right ${\rm Gproj}A$-approximations;

\vskip 5pt

{\rm (ii)} ${\rm Hom}_{D_{gp}^{b}(A)}(G^{\bullet}, G^{\bullet}[1])=0$;

\vskip 5pt

$\bullet$ \ {\bf 2-term Gorenstein silting} in $K^{b}({\rm Gproj}A)$ if it is a  2-term partial Gorenstein silting complex and ${\rm thick}G^{\bullet}= K^{b}({\rm Gproj}A)$.
\end{defn}

\vskip 10pt

Let $G^{\bullet}: G_{1}\lxr G_{0}$ be a 2-term Gorenstein silting complex in  $K^{b}({\rm Gproj}A)$. Consider the subcategories of $D_{gp}^{b}(A)$:
$$D^{\leq 0}_{gp}(G^{\bullet})=\{X^{\bullet}\in D_{gp}^{b}(A)\mid {\rm Hom}_{D_{gp}^{b}(A)}(G^{\bullet}, X^{\bullet}[i])=0, \ for \ i>0\}$$
and
$$D^{\geq 0}_{gp}(G^{\bullet})=\{X^{\bullet}\in D_{gp}^{b}(A)\mid {\rm Hom}_{D_{gp}^{b}(A)}(G^{\bullet}, X^{\bullet}[i])=0, \ for \ i<0\},$$
and the  subcategories of ${\rm mod}A:$
$$\mathcal{T}(G^{\bullet})=\{X\in {\rm mod}A\mid {\rm Hom}_{D_{gp}^{b}(A)}(G^{\bullet}, X[1])=0\}$$
and
$$\mathcal{F}(G^{\bullet})=\{Y\in {\rm mod}A\mid {\rm Hom}_{D_{gp}^{b}(A)}(G^{\bullet}, Y)=0\}.$$
Then we have the following facts.

\vskip 10pt

\begin{lem}\

\begin{enumerate}

\item $(D^{\leq 0}_{gp}(G^{\bullet}),\ D^{\geq 0}_{gp}(G^{\bullet}))$ is a t-strucute in $D_{gp}^{b}(A)$.

\vskip 5pt

\item $\mathcal{T}(G^{\bullet})=D^{\leq 0}_{gp}(G^{\bullet})\cap {\rm mod}A$ and $\mathcal{F}(G^{\bullet})=D^{\geq 1}_{gp}(G^{\bullet})\cap {\rm mod}A$.
\end{enumerate}
\end{lem}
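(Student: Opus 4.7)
I would handle the two statements separately, with the bulk of the work in (1).

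For (1), the shift-closure axioms $D^{\leq 0}_{gp}(G^{\bullet})[1]\subseteq D^{\leq 0}_{gp}(G^{\bullet})$ and $D^{\geq 0}_{gp}(G^{\bullet})[-1]\subseteq D^{\geq 0}_{gp}(G^{\bullet})$ are immediate by a one-line re-indexing of the defining vanishing conditions. The substantive content is the orthogonality $\mathrm{Hom}_{D_{gp}^{b}(A)}(X,Y[-1])=0$ for $X\in D^{\leq 0}_{gp}(G^{\bullet})$ and $Y\in D^{\geq 0}_{gp}(G^{\bullet})$ together with the existence of a truncation triangle for each $C\in D_{gp}^{b}(A)$. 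For these I would adapt the classical silting t-structure construction (Keller--Vossieck, Hoshino--Kato--Miyachi, Aihara--Iyama) to $D_{gp}^{b}(A)$. Because $\mathrm{thick}\,G^{\bullet}=K^{b}(\mathrm{Gproj}A)$, every $C\in D_{gp}^{b}(A)$ can be built by iterated cones from shifts of $G^{\bullet}$; combined with the silting vanishing $\mathrm{Hom}(G^{\bullet},G^{\bullet}[i])=0$ for $i>0$, one constructs the truncation inductively on the largest $n$ with $\mathrm{Hom}(G^{\bullet},C[n])\neq 0$ by taking right $\mathrm{add}(G^{\bullet})[-n]$-approximations and passing to cones. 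Orthogonality then falls out of the same generation picture: any $X\in D^{\leq 0}_{gp}(G^{\bullet})$ lies in the aisle built from shifts $G^{\bullet}[j]$ with $j\geq 0$, so a map $X\to Y[-1]$ is controlled by $\mathrm{Hom}(G^{\bullet}[j],Y[-1])=\mathrm{Hom}(G^{\bullet},Y[-1-j])$, which vanishes for all $j\geq 0$ because $-1-j<0$ and $Y\in D^{\geq 0}_{gp}(G^{\bullet})$.

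For (2), the key observation is that $G^{\bullet}$ lives in only two adjacent cohomological degrees, so the relevant $\mathrm{Hom}$-groups collapse when $X$ is a module. Rotating the short exact sequence of complexes $0\to G_{1}[1]\to G^{\bullet}\to G_{0}\to 0$ into a distinguished triangle in $D_{gp}^{b}(A)$ and applying $\mathrm{Hom}_{D_{gp}^{b}(A)}(-,X[i])$ yields the long exact sequence
\begin{equation*}
\mathrm{Hom}(G_{0},X[i])\to \mathrm{Hom}(G^{\bullet},X[i])\to \mathrm{Hom}(G_{1},X[i-1])\to \mathrm{Hom}(G_{0},X[i+1]).
\end{equation*}
Since $G_{0},G_{1}\in\mathrm{Gproj}A$, their proper Gorenstein-projective resolutions are trivial, hence $\mathrm{Hom}_{D_{gp}^{b}(A)}(G_{j},X[k])=\mathrm{Gext}^{k}_{A}(G_{j},X)$ vanishes for every $k\neq 0$ whenever $X\in\mathrm{mod}A$. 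Plugging this in, both flanking groups vanish once $i\geq 2$ or $i\leq -1$, forcing $\mathrm{Hom}(G^{\bullet},X[i])=0$ in those degrees. Consequently, for a module $X$ the condition $X\in D^{\leq 0}_{gp}(G^{\bullet})$ collapses to $\mathrm{Hom}(G^{\bullet},X[1])=0$, which is exactly $\mathcal{T}(G^{\bullet})$, while $X\in D^{\geq 1}_{gp}(G^{\bullet})$ collapses to $\mathrm{Hom}(G^{\bullet},X)=0$, which is exactly $\mathcal{F}(G^{\bullet})$.

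The main obstacle is the truncation axiom in (1): one has to produce, inside $D_{gp}^{b}(A)$ itself rather than merely at the level of $K^{b}(\mathrm{Gproj}A)$, a bounded iterated extension of shifts of $G^{\bullet}$ that realizes the $\leq 0$-part of an arbitrary $C$. The delicate point is ensuring that the right $\mathrm{add}(G^{\bullet})[n]$-approximations driving the induction exist in $D_{gp}^{b}(A)$ and that their cones remain bounded so the recursion terminates; this uses the silting-generation hypothesis together with honest bookkeeping on Gorenstein-projective approximations. Once this is secured, the orthogonality and the module-level identification in (2) follow cleanly from the arguments sketched above.
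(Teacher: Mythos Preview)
Your proposal is correct and aligns with the paper's approach. The paper's own proof is extremely terse: for (1) it simply cites \cite[Theorem~1.3]{HKM}, and for (2) it says only ``can be obtained by definitions.'' Your sketch for (1) is precisely the Hoshino--Kato--Miyachi/Aihara--Iyama silting t-structure construction that the citation points to, and you correctly flag the truncation axiom as the nontrivial step. For (2), your argument via the triangle $G_{1}\to G_{0}\to G^{\bullet}\to G_{1}[1]$ and the automatic vanishing of $\mathrm{Hom}_{D_{gp}^{b}(A)}(G_{j},X[k])$ for $k\neq 0$ is exactly the content hidden behind the paper's phrase ``by definitions''; you have simply made explicit why the infinitely many vanishing conditions defining $D^{\leq 0}_{gp}$ and $D^{\geq 1}_{gp}$ collapse to a single one when restricted to modules.
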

\begin{proof}\ The proof of (1) can be found in \cite[Theorem 1.3]{HKM}. (2) can be obtained by definitions.
\end{proof}

\vskip 10pt

Next we consider the relation between 2-term Gorenstein silting complexes, t-structures and  torsion pairs in module categories. A key lemma is given below.

\vskip 10pt

\begin{lem}\
\label{exactseq}\
For any $X^{\bullet}\in D_{gp}^{b}(A)$ and $n\in \mathbb{Z}$, we have a functorial exact sequence
$$0\lxr {\rm Hom}_{D_{gp}^{b}(A)}(G^{\bullet},\ H^{n-1}(X^{\bullet})[1])\to {\rm Hom}_{D_{gp}^{b}(A)}(G^{\bullet},\ X^{\bullet}[n])$$
$$\lxr {\rm Hom}_{D_{gp}^{b}(A)}(G^{\bullet},\ H^{n}(X^{\bullet}))\lxr 0.$$
\end{lem}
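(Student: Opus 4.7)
The plan is to apply the standard truncation triangle for $X^\bullet$ and collapse the resulting long exact sequence using the two-term nature of $G^\bullet$. First I would establish the key vanishing: for any $A$-module $M$ and integer $k \notin \{0,1\}$, $\mathrm{Hom}_{D_{gp}^b(A)}(G^\bullet, M[k]) = 0$. This is immediate at the chain level, because $G^\bullet$ sits in cohomological degrees $-1$ and $0$ while $M[k]$ is concentrated in degree $-k$, and no null-homotopies can interfere when $k \notin \{0,1\}$. To transfer the vanishing to $D_{gp}^b(A)$ I use that the canonical functor $K^b(\mathrm{Gproj}A) \To D_{gp}^b(A)$ is fully faithful on $G^\bullet$; this holds because, for each $\mathcal{GP}$-acyclic complex $W$, the complex $\mathrm{Hom}_A(G^i,W)$ is acyclic for every term of $G^\bullet$, forcing $\mathrm{Hom}_{K^b(A)}(G^\bullet,W)=0$. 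By dévissage along the soft-truncation triangles $\tau^{\le i-1}Y \To \tau^{\le i}Y \To H^i(Y)[-i]\To$, this vanishing propagates to any $Y^\bullet$ whose cohomology is supported entirely in degrees $\ge 1$ or entirely in degrees $\le -2$.

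Next I would take the truncation triangle
$$\tau^{\le n-1}X^\bullet \To X^\bullet \To \tau^{\ge n}X^\bullet \To (\tau^{\le n-1}X^\bullet)[1],$$
shift it by $[n]$, and apply $\mathrm{Hom}_{D_{gp}^b(A)}(G^\bullet,-)$. Because $(\tau^{\le n-1}X^\bullet)[n+1]$ has cohomology in degrees $\le -2$ and $(\tau^{\ge n}X^\bullet)[n-1]$ has cohomology in degrees $\ge 1$, the two outer terms vanish by the previous step, collapsing the long exact sequence to the short exact sequence
$$0 \To \mathrm{Hom}_{D_{gp}^b(A)}(G^\bullet, \tau^{\le n-1}X^\bullet[n]) \To \mathrm{Hom}_{D_{gp}^b(A)}(G^\bullet, X^\bullet[n]) \To \mathrm{Hom}_{D_{gp}^b(A)}(G^\bullet, \tau^{\ge n}X^\bullet[n]) \To 0.$$
I would then identify the flanking terms using the further triangles
$$\tau^{\le n-2}X^\bullet[n] \To \tau^{\le n-1}X^\bullet[n] \To H^{n-1}(X^\bullet)[1] \To,$$
$$H^n(X^\bullet) \To \tau^{\ge n}X^\bullet[n] \To \tau^{\ge n+1}X^\bullet[n] \To,$$
killing the extraneous end (cohomology in degrees $\le -2$, respectively $\ge 1$) by the same vanishing principle. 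This yields $\mathrm{Hom}_{D_{gp}^b(A)}(G^\bullet, \tau^{\le n-1}X^\bullet[n]) \cong \mathrm{Hom}_{D_{gp}^b(A)}(G^\bullet, H^{n-1}(X^\bullet)[1])$ and $\mathrm{Hom}_{D_{gp}^b(A)}(G^\bullet, \tau^{\ge n}X^\bullet[n]) \cong \mathrm{Hom}_{D_{gp}^b(A)}(G^\bullet, H^n(X^\bullet))$, and the naturality of every truncation triangle in $X^\bullet$ provides the functoriality of the resulting sequence.

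The main obstacle is the dévissage step in the Gorenstein setting: a usual quasi-isomorphism need not be a $\mathcal{GP}$-quasi-isomorphism, so the soft-truncation ``filtration by cohomology'' is not automatically valid inside $D_{gp}^b(A)$. The safest route is to carry out every truncation-triangle step inside $K^b(A)$, where these are honest mapping-cone triangles, compute each Hom group there, and invoke the fully faithful embedding $K^b(\mathrm{Gproj}A) \hookrightarrow D_{gp}^b(A)$ only at the end to read the result in $D_{gp}^b(A)$. Once this bookkeeping is set up, the whole argument reduces to the two-term vanishing principle applied to shifted truncations.
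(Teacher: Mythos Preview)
Your route is genuinely different from the paper's. The paper never truncates $X^\bullet$; instead it applies ${\rm Hom}_{D_{gp}^{b}(A)}(-,X^\bullet[n])$ to the defining triangle $G_1\stackrel{d^1}{\lxr}G_0\lxr G^\bullet\lxr G_1[1]$, which already lives in $K^b({\rm Gproj}A)$ and hence is an honest triangle in $D_{gp}^{b}(A)$. This yields a short exact sequence with flanking terms ${\rm Coker}\,{\rm Hom}(d^1,X^\bullet[n-1])$ and ${\rm Ker}\,{\rm Hom}(d^1,X^\bullet[n])$, and these are then identified with ${\rm Hom}_{D_{gp}^{b}(A)}(G^\bullet,H^{n-1}(X^\bullet)[1])$ and ${\rm Hom}_{D_{gp}^{b}(A)}(G^\bullet,H^{n}(X^\bullet))$ by evaluating ${\rm Hom}$ from the stalk Gorenstein-projectives $G_i$.

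The obstacle you flag in your last paragraph is real, and the fix you propose does not remove it. In $K^b(A)$ the soft-truncation ``triangles'' are \emph{not} distinguished: the mapping cone of the inclusion $\tau^{\le n-1}X^\bullet\hookrightarrow X^\bullet$ is only ordinarily quasi-isomorphic to $\tau^{\ge n}X^\bullet$, not homotopy-equivalent and not $\mathcal{GP}$-quasi-isomorphic. Hence you cannot exchange ${\rm Hom}_{K^b(A)}(G^\bullet,{\rm Cone})$ for ${\rm Hom}_{K^b(A)}(G^\bullet,\tau^{\ge n}X^\bullet)$, and a fortiori not for ${\rm Hom}_{D_{gp}^{b}(A)}(G^\bullet,H^n(X^\bullet))$. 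The same defect recurs in your d\'evissage for the vanishing claim (it invokes the triangles $\tau^{\le i-1}Y\to\tau^{\le i}Y\to H^i(Y)[-i]$, which again exist only in $D^b(A)$) and in the ``further triangles'' you use to identify the flanking terms. Decomposing $G^\bullet$ rather than $X^\bullet$, as the paper does, is exactly what avoids ever needing a quasi-isomorphism to be a $\mathcal{GP}$-quasi-isomorphism.
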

\begin{proof}\ For $X^{\bullet}[n]\in D_{gp}^{b}(A)$, applying ${\rm Hom}_{D_{gp}^{b}(A)}(-, X^{\bullet}[n])$ to a distinguished triangle
$$G_{1}\stackrel{d^{1}}{\lxr} G_{0}\lxr G^{\bullet}\lxr G_{1}[1],$$
we have a short exact sequence
$$0\lxr {\rm Coker}({\rm Hom}_{D_{gp}^{b}(A)}(d^{1},\ X^{\bullet}[n-1]))\lxr {\rm Hom}_{D_{gp}^{b}(A)}(G^{\bullet},\ X^{\bullet}[n])$$
$$\lxr {\rm Ker}({\rm Hom}_{D_{gp}^{b}(A)}(d^{1},\ X^{\bullet}[n]))\lxr 0.$$
Since
$$\begin{aligned}
{\rm Ker}({\rm Hom}_{D_{gp}^{b}(A)}(d^{1},\ X^{\bullet}[n]))&\cong {\rm Ker}({\rm Hom}_{A}(d^{1},\ H^{n}(X^{\bullet})))\\
&\cong {\rm Hom}_{D_{gp}^{b}(A)}(G^{\bullet},\ H^{n}(X^{\bullet})),
\end{aligned}$$
and
$$\begin{aligned}
{\rm Coker}({\rm Hom}_{D_{gp}^{b}(A)}(d^{1},\ X^{\bullet}[n-1]))&\cong {\rm Coker}({\rm Hom}_{A}(d^{1},\ H^{n-1}(X^{\bullet})))\\
&\cong {\rm Hom}_{D_{gp}^{b}(A)}(G^{\bullet},\ H^{n-1}(X^{\bullet})[1]),
\end{aligned}$$
we get the desired exact sequence.
\end{proof}

\vskip 10pt

\begin{prop}\
\label{heart}
Let $G^{\bullet}$ be a 2-term Gorenstein silting complex in $K^{b}({\rm Gproj}A)$, and  $\mathcal{C}_{gp}(G^{\bullet}):=D_{gp}^{\leq 0}(G^{\bullet})\cap D_{gp}^{\geq 0}(G^{\bullet})$ the heart of the induced t-structure $(D_{gp}^{\leq 0}(G^{\bullet}),\ D_{gp}^{\geq 0}(G^{\bullet}))$. Let $B={\rm End}_{D_{gp}^{b}(A)}(G^{\bullet})^{op}$.

\vskip 5pt

\vskip 5pt

\begin{enumerate}
\item $\mathcal{C}_{gp}(G^{\bullet})$ is an abelian category and the short exact sequences in $\mathcal{C}_{gp}(G^{\bullet})$ are precisely the triangles in $D_{gp}^{b}(A)$ all of whose vertices are objects in $\mathcal{C}_{gp}(G^{\bullet})$.

\vskip 5pt

\item For a complex $X^{\bullet}$ in $D_{gp}^{b}(A)$, we have that $X^{\bullet}$ is in $\mathcal{C}_{gp}(G^{\bullet})$ if and only if $H^{0}(X^{\bullet})$ is in $\mathcal{T}(G^{\bullet}),\ H^{-1}(X^{\bullet})$ is in $\mathcal{F}(G^{\bullet})$ and $H^{i}(X^{\bullet})=0$ for $i\neq -1,0$.

\vskip 5pt

\item  The functor ${\rm Hom}_{D_{gp}^{b}(A)}(G^{\bullet}, -): \mathcal{C}_{gp}(G^{\bullet})\lxr {\rm mod}B$ is an equivalence of abelian categories.

\vskip 5pt

\item For any $E\in {\rm Gproj}A$, there is a triangle in $D_{gp}^{b}(A)$
$$E\stackrel{e}{\lxr} G^{\prime\bullet }\stackrel{f}{\lxr} G^{ \prime \prime\bullet}\stackrel{g}{\lxr} E[1] \ \ \ \ \ \ \ \ \ \ \ \ (\triangle_{G^{\bullet}})$$
with $G^{\prime\bullet }, G^{\prime \prime\bullet }\in {\rm add}G^{\bullet}$.

\vskip 5pt

\item Suppose that $A$ is of finite CM-type with the Gorenstein-projective generator $E$. Then $$Q^{\bullet}: {\rm Hom}_{D_{gp}^{b}(A)}(G^{\bullet}, G^{\prime \bullet})\xrightarrow{{\rm Hom}_{D_{gp}^{b}(A)}(G^{\bullet}, f)} {\rm Hom}_{D_{gp}^{b}(A)}(G^{\bullet}, G^{\prime \prime \bullet})$$ is a 2-term partial silting complex in $K^{b}({\rm proj} B)$, where $f$ is the map from the triangle $\triangle_{G^{\bullet}}$ of the Gorenstein-projective generator $E$.
\end{enumerate}
\end{prop}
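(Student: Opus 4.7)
The plan is first to note that the functor $F := \mathrm{Hom}_{D_{gp}^{b}(A)}(G^{\bullet}, -)$ restricts to an equivalence of additive categories $\mathrm{add}\, G^{\bullet} \simeq \mathrm{proj}\, B$ in the standard Yoneda fashion, using that $F(G^{\bullet}) = B$ as a left $B$-module. Consequently $Q^{\bullet}$ is automatically a genuine $2$-term complex in $K^{b}(\mathrm{proj}\,B)$, and it remains to verify the rigidity $\mathrm{Hom}_{K^{b}(\mathrm{proj}\,B)}(Q^{\bullet}, Q^{\bullet}[1]) = 0$; higher shifts vanish trivially for a $2$-term complex.

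Writing $Q^{\bullet} = [F(G^{\prime\bullet}) \xrightarrow{F(f)} F(G^{\prime\prime\bullet})]$ in degrees $-1, 0$, a class in $\mathrm{Hom}_{K^{b}(\mathrm{proj}\,B)}(Q^{\bullet}, Q^{\bullet}[1])$ is represented by a single morphism $\phi \in \mathrm{Hom}_{B}(F(G^{\prime\bullet}), F(G^{\prime\prime\bullet}))$ modulo those of the form $s^{0} F(f) - F(f) s^{-1}$. Transporting this back along the equivalence $F$, the vanishing to establish becomes the assertion that every $\psi \in \mathrm{Hom}_{D_{gp}^{b}(A)}(G^{\prime\bullet}, G^{\prime\prime\bullet})$ admits a presentation $\psi = \beta f - f\alpha$ with $\alpha \in \mathrm{End}(G^{\prime\bullet})$ and $\beta \in \mathrm{End}(G^{\prime\prime\bullet})$.

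To produce such a decomposition I would systematically apply $\mathrm{Hom}(-, G^{\prime\bullet})$, $\mathrm{Hom}(-, G^{\prime\prime\bullet})$ and $\mathrm{Hom}(E, -)$ to the triangle $\triangle_{G^{\bullet}}$. Since $G^{\prime\bullet}, G^{\prime\prime\bullet} \in \mathrm{add}\, G^{\bullet}$, the silting rigidity $\mathrm{Hom}_{D_{gp}^{b}(A)}(G^{\bullet}, G^{\bullet}[1]) = 0$ propagates to $\mathrm{Hom}(G^{\prime\prime\bullet}, G^{\prime\bullet}[1]) = \mathrm{Hom}(G^{\prime\prime\bullet}, G^{\prime\prime\bullet}[1]) = 0$, which via the long exact sequences yields the surjectivity of both $e^{*} \colon \mathrm{End}(G^{\prime\bullet}) \twoheadrightarrow \mathrm{Hom}(E, G^{\prime\bullet})$ and $e^{*} \colon \mathrm{Hom}(G^{\prime\bullet}, G^{\prime\prime\bullet}) \twoheadrightarrow \mathrm{Hom}(E, G^{\prime\prime\bullet})$, the latter with kernel $\mathrm{Im}(f^{*})$. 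For a given $\psi$, I would seek $\alpha$ with $(\psi + f\alpha) e = 0$, placing $\psi + f\alpha$ in $\mathrm{Im}(f^{*})$ and producing the desired $\beta$.

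The decisive observation is that $\mathrm{Hom}_{D_{gp}^{b}(A)}(E, E[1]) = \mathrm{Gext}^{1}_{A}(E, E) = 0$: because $E$ is itself Gorenstein-projective, $\mathrm{id}_{E}$ is a proper Gorenstein-projective resolution of $E$, so all higher Gorenstein extension groups of $E$ against anything collapse. The long exact sequence obtained by applying $\mathrm{Hom}(E, -)$ to $\triangle_{G^{\bullet}}$ then forces $f_{*} \colon \mathrm{Hom}(E, G^{\prime\bullet}) \to \mathrm{Hom}(E, G^{\prime\prime\bullet})$ to be surjective, so $\psi e = f\gamma$ for some $\gamma$; surjectivity of $e^{*}$ on $\mathrm{End}(G^{\prime\bullet})$ next lifts $-\gamma$ to the required $\alpha \in \mathrm{End}(G^{\prime\bullet})$, completing the construction. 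In my view the main obstacle is not any single technical estimate but the recognition that the whole argument pivots on the vanishing $\mathrm{Gext}^{1}(E, E) = 0$; once this is spotted, the rest is a routine diagram chase driven by the silting rigidity and the triangle $\triangle_{G^{\bullet}}$.
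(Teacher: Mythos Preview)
Your proposal addresses only part~(5), but that is where the substance lies (parts (1)--(3) are handled in the paper by citing general $t$-structure theory and \cite{HKM}, and part~(4) is a standard approximation argument); for part~(5) your argument is correct and essentially identical to the paper's. Both hinge on the vanishing ${\rm Hom}_{D_{gp}^{b}(A)}(E,E[1])=0$, and both decompose an arbitrary $\psi\colon G'^{\bullet}\to G''^{\bullet}$ as $\beta f + f\alpha$ (up to sign) by chasing the long exact sequences attached to $\triangle_{G^{\bullet}}$; the paper merely packages the same chase as ``extend $\psi$ to a morphism of rotated triangles and then factor'', which is exactly your surjectivity-of-$e^{*}$ and $f_{*}$ steps in different clothing.
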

\begin{proof}\ (1)\ The pair $(D_{gp}^{\leq 0}(G^{\bullet}), D_{gp}^{\geq 0}(G^{\bullet}))$ is a t-structure in $D_{gp}^{b}(A)$. Then we come to the conclusion.

\vskip 5pt

(2)\ From Lemma~\ref{exactseq}, we have that
$$\begin{aligned}
& \ \ \ \ D_{gp}^{\leq 0}(G^{\bullet})\\
&=\{X^{\bullet}\in D_{gp}^{b}(A)\mid{\rm Hom}_{D_{gp}^{b}(A)}(G^{\bullet}, H^{i}(X^{\bullet}))=0\ {\rm for}\ i>0, \ {\rm Hom}_{D_{gp}^{b}(A)}(G^{\bullet}, H^{j}(X^{\bullet})[1])=0\ {\rm for}\ j\geq0 \}\\
&=\{X^{\bullet}\in D_{gp}^{b}(A)\mid H^{i}(X^{\bullet})=0\ {\rm for}\ i>0\ {\rm and}\ {\rm Hom}_{D_{gp}^{b}(A)}(G^{\bullet},\ H^{0}(X^{\bullet})[1])=0\}\\
&=\{X^{\bullet}\in D_{gp}^{b}(A)\mid H^{i}(X^{\bullet})=0\ {\rm for}\ i>0\ {\rm and}\ H^{0}(X^{\bullet})\in \mathcal{T}(G^{\bullet})\},
\end{aligned}$$
and
$$\begin{aligned}
& \ \ \ \ D_{gp}^{\geq 0}(G^{\bullet})\\
&=\{X^{\bullet}\in D_{gp}^{b}(A)\mid {\rm Hom}_{D_{gp}^{b}(A)}(G^{\bullet}, H^{i}(X^{\bullet}))=0\ {\rm for}\ i<0, \ {\rm Hom}_{D_{gp}^{b}(A)}(G^{\bullet}, H^{j}(X^{\bullet})[1])=0\ {\rm for}\ j<-1 \}\\
&=\{X^{\bullet}\in D_{gp}^{b}(A)\mid H^{i}(X^{\bullet})=0\ {\rm for}\ i<-1\ {\rm and}\ {\rm Hom}_{D_{gp}^{b}(A)}(G^{\bullet},\ H^{-1}(X^{\bullet}))=0\}\\
&=\{X^{\bullet}\in D_{gp}^{b}(A)\mid H^{i}(X^{\bullet})=0\ {\rm for}\ i<-1\ {\rm and}\ H^{-1}(X^{\bullet})\in \mathcal{F}(G^{\bullet})\}.
\end{aligned}$$
Therefore, we get that $X^{\bullet}\in \mathcal{C}_{gp}(G^{\bullet})$ if and only if $H^{0}(X^{\bullet})\in \mathcal{T}(G^{\bullet}),\ H^{-1}(X^{\bullet})\in \mathcal{F}(G^{\bullet})$ and $H^{i}(X^{\bullet})=0$ for $i\neq -1,0$.

\vskip 5pt

(3)\ The proof can be found in \cite[Theorem 1.3]{HKM}.

\vskip 5pt

(4)\ Let $G^{\prime \prime \bullet}\lxr E[1]$ be a right add$G^{\bullet}$-approximation of $E[1]$. Extend it to a triangle
$$E\lxr H^{\bullet}\lxr G^{\prime \prime \bullet}\lxr E[1],\eqno(*)$$
where $H^{\bullet}$ is a 2-term complex in $K^{b}({\rm Gproj}A)$. By applying the functors ${\rm Hom}_{D_{gp}^{b}(A)}(G^{\bullet}, -)$ and ${\rm Hom}_{D_{gp}^{b}(A)}(-, G^{\bullet})$ to the triangle $(*)$, we have that
$${\rm Hom}_{D_{gp}^{b}(A)}(G^{\bullet}, H^{\bullet}[1])=0 \ \ \ and\ \ \ {\rm Hom}_{D_{gp}^{b}(A)}(H^{\bullet}, G^{\bullet}[1])=0.$$
Applying ${\rm Hom}_{D_{gp}^{b}(A)}(-, H^{\bullet})$ yields ${\rm Hom}_{D_{gp}^{b}(A)}(H^{\bullet}, H^{\bullet}[1])=0$. Hence
$G^{\bullet}\oplus H^{\bullet}$ is a 2-term partial Gorenstein silting complex in $K^{b}({\rm Gproj}A)$. The triangle $(*)$ shows that $E\in {\rm thick}(G^{\bullet}\oplus H^{\bullet})$ and so $G^{\bullet}\oplus H^{\bullet}$ is a 2-term Gorenstein silting complex. Therefore, we get the desired triangle $\triangle_{G^{\bullet}}$.

\vskip 5pt

(5) \ Let $\alpha$ be a morphism in ${\rm Hom}_{K^{b}({\rm proj}B)}(Q^{\bullet}, Q^{\bullet}[1])$. Then it has the following form
\[
\xymatrix@C=55pt{
 & {\rm Hom}_{D_{gp}^{b}(A)}(G^{\bullet}, G^{\prime \bullet})\ar[r]^{{\rm Hom}_{D_{gp}^{b}(A)}(G^{\bullet}, f)}\ar[d]^{\alpha} & {\rm Hom}_{D_{gp}^{b}(A)}(G^{\bullet}, G^{\prime \prime \bullet})  \\
{\rm Hom}_{D_{gp}^{b}(A)}(G^{\bullet}, G^{\prime \bullet})\ar[r]^{{\rm Hom}_{D_{gp}^{b}(A)}(G^{\bullet}, f)} & {\rm Hom}_{D_{gp}^{b}(A)}(G^{\bullet}, G^{\prime \prime \bullet}) &, }
\]
and there is a morphism $h: G^{\prime \bullet}\lxr G^{\prime \prime \bullet}$ such that $\alpha= {\rm Hom}_{D_{gp}^{b}(A)}(G^{\bullet}, h)$. Since ${\rm Hom}_{D_{gp}^{b}(A)}(E,$ $ E[1])=0$, there are unique morphisms $h_{1},\ h_{2}$ such that the following diagram
\[
\xymatrix@C=45pt{
E\ar[r]^{e}\ar[d]^{h_{1}} & G^{\prime \bullet}\ar[r]^{f}\ar[d]^{h}\ar@{-->}[ld]_{h_{4}} & G^{\prime \prime \bullet}\ar[r]^{g}\ar[d]^{h_{2}}\ar@{-->}[ld]_{h_{3}} & E[1]\ar[d]^{h_{1}[1]}  \\
G^{\prime \bullet}\ar[r]^{f} & G^{\prime \prime \bullet}\ar[r]^{g} & E[1]\ar[r]^{-e[1]} & G^{\prime \bullet}[1]. }
\]
is commutative. So there is a morphism $h_{3}$ such that $h_{2}=gh_{3}$, and also,
$$g(h-h_{3}f)=gh-gh_{3}f=gh-h_{2}f=0.$$
Hence there is a morphism $h_{4}$ such that $h-h_{3}f=fh_{4}$. Applying ${\rm Hom}_{D_{gp}^{b}(A)}(G^{\bullet}, -)$ to $h-h_{3}f=fh_{4}$ yields
$$\alpha= {\rm Hom}_{D_{gp}^{b}(A)}(G^{\bullet}, h_{3}){\rm Hom}_{D_{gp}^{b}(A)}(G^{\bullet}, f)+ {\rm Hom}_{D_{gp}^{b}(A)}(G^{\bullet}, f){\rm Hom}_{D_{gp}^{b}(A)}(G^{\bullet}, h_{4}),$$
which implies that $\alpha$ regarded as a map in ${\rm Hom}_{K^{b}({\rm proj}B)}(Q^{\bullet}, Q^{\bullet}[1])$ is null-homotopic. Thus, $Q^{\bullet}$ is a 2-term partial silting complex in $K^{b}({\rm proj}B)$.
\end{proof}

\vskip 10pt

Let $X\in {\rm mod}A$. Consider the canonical sequence of $X$:
$$0\lxr tX\stackrel{i_{X}}{\lxr} X\lxr X/tX\lxr 0,$$
where $tX=\sum {\rm Im}f$ with $f\in {\rm Hom}_{A}(H^{0}(G^{\bullet}),\ X) $ such that any $g:E\lxr X$ factors through $f$ for any $E\in {\rm Gproj}A$. We collect some properties of $\mathcal{T}(G^{\bullet})$ and $\mathcal{F}(G^{\bullet})$.

\vskip 5pt

\begin{lem}\ The following hold:

\vskip 5pt

\begin{enumerate}
\item $\mathcal{T}(G^{\bullet})$ is closed under $G$-epimorphic images.

\vskip 5pt

\item $\mathcal{F}(G^{\bullet})$ is closed under $G$-submodules.
\vskip 5pt

\item For any $X\in {\rm mod}A,\ {\rm Hom}_{A}(H^{0}(G^{\bullet}),\ i_{X})$ is an isomorphism.
\end{enumerate}
\end{lem}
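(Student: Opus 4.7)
The plan is to derive (1) and (2) from the long Hom exact sequences attached to the distinguished triangles coming from $G$-exact short exact sequences of modules, combined with the vanishing furnished by Lemma~\ref{exactseq}; and to obtain (3) from left-exactness of ${\rm Hom}$ together with the defining property of $tX$ as a trace.

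For (1), let $X \in \mathcal{T}(G^{\bullet})$ and let $0 \to K \to X \to Y \to 0$ be a $G$-exact sequence. As recalled in the preliminaries, this produces a distinguished triangle $K \to X \to Y \to K[1]$ in $D_{gp}^{b}(A)$. Applying ${\rm Hom}_{D_{gp}^{b}(A)}(G^{\bullet}, -)$ yields the segment
$$ {\rm Hom}_{D_{gp}^{b}(A)}(G^{\bullet}, X[1]) \to {\rm Hom}_{D_{gp}^{b}(A)}(G^{\bullet}, Y[1]) \to {\rm Hom}_{D_{gp}^{b}(A)}(G^{\bullet}, K[2]). $$
The left term vanishes by hypothesis, and the right term vanishes by Lemma~\ref{exactseq} applied to the module $K$ with $n=2$, since $H^{1}(K) = H^{2}(K) = 0$. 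Hence $Y \in \mathcal{T}(G^{\bullet})$. Statement (2) is dual: given a $G$-exact $0 \to X \to Y \to Z \to 0$ with $Y \in \mathcal{F}(G^{\bullet})$, one extracts
$$ {\rm Hom}_{D_{gp}^{b}(A)}(G^{\bullet}, Z[-1]) \to {\rm Hom}_{D_{gp}^{b}(A)}(G^{\bullet}, X) \to {\rm Hom}_{D_{gp}^{b}(A)}(G^{\bullet}, Y); $$
the right term vanishes by hypothesis, and the left vanishes by Lemma~\ref{exactseq} with $n=-1$ (since $H^{-2}(Z) = H^{-1}(Z) = 0$), so $X \in \mathcal{F}(G^{\bullet})$.

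For (3), apply ${\rm Hom}_{A}(H^{0}(G^{\bullet}), -)$ to the canonical sequence $0 \to tX \to X \to X/tX \to 0$ to obtain the left exact sequence
$$ 0 \to {\rm Hom}_{A}(H^{0}(G^{\bullet}), tX) \to {\rm Hom}_{A}(H^{0}(G^{\bullet}), X) \to {\rm Hom}_{A}(H^{0}(G^{\bullet}), X/tX). $$
Injectivity of ${\rm Hom}_{A}(H^{0}(G^{\bullet}), i_{X})$ is immediate. Surjectivity amounts to showing that every $f \colon H^{0}(G^{\bullet}) \to X$ factors through $tX \hookrightarrow X$, i.e.\ ${\rm Im}(f) \subseteq tX$. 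This should be read off from the definition of $tX$ as the sum of images of morphisms from $H^{0}(G^{\bullet})$ to $X$ which absorb all maps from objects of ${\rm Gproj}\, A$: concretely, given $f$, pairing it via direct sum with any morphism already present in the defining family produces a new morphism still in that family whose image contains ${\rm Im}(f)$, and hence ${\rm Im}(f) \subseteq tX$.

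The only real subtlety lies in step (3), namely correctly unpacking the definition of $tX$ so as to ensure its defining family is wide enough to absorb every morphism $H^{0}(G^{\bullet}) \to X$; once this is verified, the argument is purely formal, and the genuine input in parts (1)--(2) is just the cohomological vanishing already packaged in Lemma~\ref{exactseq}.
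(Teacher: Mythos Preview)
Your arguments for (1) and (2) are essentially identical to the paper's: both pass from a $G$-exact sequence to a triangle in $D_{gp}^{b}(A)$, apply ${\rm Hom}_{D_{gp}^{b}(A)}(G^{\bullet},-)$, and use the vanishing ${\rm Hom}_{D_{gp}^{b}(A)}(G^{\bullet},M[2])=0={\rm Hom}_{D_{gp}^{b}(A)}(G^{\bullet},M[-1])$ for a module $M$; you justify this via Lemma~\ref{exactseq}, while the paper simply asserts it.

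For (3) the paper writes only ``by the definition of $tX$, we immediately obtain the desired isomorphism''. Your more explicit unpacking is welcome, but the direct-sum maneuver is both unnecessary and not quite well-typed: the paired morphism $(f,f')$ has domain $H^{0}(G^{\bullet})\oplus H^{0}(G^{\bullet})$, so it does not literally lie in ${\rm Hom}_{A}(H^{0}(G^{\bullet}),X)$, the family over which $tX$ is defined. Your move also tacitly assumes that the defining family is nonempty. Under the natural reading of $tX$ as the trace $\sum_{f}{\rm Im}(f)$ with $f$ ranging over \emph{all} of ${\rm Hom}_{A}(H^{0}(G^{\bullet}),X)$ (the extra clause about factoring maps from ${\rm Gproj}A$ being a property rather than a restriction on the $f$'s), any given $f$ already appears as a summand, so ${\rm Im}(f)\subseteq tX$ tautologically and surjectivity of ${\rm Hom}_{A}(H^{0}(G^{\bullet}),i_{X})$ follows without any pairing. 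You were right to flag this as the only delicate point; once the definition is read as the ordinary trace, the argument is indeed purely formal.
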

\begin{proof}\ (1)\ \ Let $0\lxr X\lxr Y\lxr Z\lxr 0$ be a $G$-exact sequence in ${\rm mod}A$. Applying ${\rm Hom}_{D_{gp}^{b}(A)}(G^{\bullet}, -)$, we get the following exact sequence
$${\rm Hom}_{D_{gp}^{b}(A)}(G^{\bullet}, Y[1])\lxr {\rm Hom}_{D_{gp}^{b}(A)}(G^{\bullet}, Z[1])\lxr {\rm Hom}_{D_{gp}^{b}(A)}(G^{\bullet}, X[2]).$$
Since ${\rm Hom}_{D_{gp}^{b}(A)}(G^{\bullet}, X[2])=0$, then we have that $Y\in \mathcal{T}(G^{\bullet})$ implies $Z\in \mathcal{T}(G^{\bullet})$.

\vskip 5pt

(2)\ \ Let $0\lxr X\lxr Y\lxr Z\lxr 0$ be a $G$-exact sequence in ${\rm mod}A$. Applying ${\rm Hom}_{D_{gp}^{b}(A)}(G^{\bullet}, -)$, we get the following exact sequence
$$0\lxr {\rm Hom}_{D_{gp}^{b}(A)}(G^{\bullet}, X)\lxr {\rm Hom}_{D_{gp}^{b}(A)}(G^{\bullet}, Y)\lxr {\rm Hom}_{D_{gp}^{b}(A)}(G^{\bullet}, Z).$$
Hence $Y\in \mathcal{F}(G^{\bullet})$ implies $X\in \mathcal{F}(G^{\bullet})$.

\vskip 5pt

(3)\ \ By the definition of $tX$, we immediately obtain the desired isomorphism.
\end{proof}

\vskip 10pt

\begin{lem}\
\label{triangle}
There exists a triangle in  $D_{gp}^{b}(A)$ for a 2-term Gorenstein silting complex $G^{\bullet}$ of the form
$$H^{-1}(G^{\bullet})[1]\lxr G^{\bullet}\lxr H^{0}(G^{\bullet})\lxr H^{-1}(G^{\bullet})[2].$$
\begin{proof}\ Let $G^{\prime\bullet}:=0\lxr {\rm Im}d^{1}\lxr G_{0}\lxr 0$. We have a $G$-exact sequence
$$0\lxr {\rm Ker}d^{1}[1]\lxr G^{\bullet}\lxr G^{\prime\bullet}\lxr 0$$
in $C^{b}(A)$. Since $G^{\prime\bullet}$ is $\mathcal{GP}$-quasi-isomorphic to
$H^{0}(G^{\bullet})$, then  $G^{\prime\bullet}\cong H^{0}(G^{\bullet})$ in $D_{gp}^{b}(A)$, we get the desired triangle in $D_{gp}^{b}(A)$ of the form
$$H^{-1}(G^{\bullet})[1]\lxr G^{\bullet}\lxr H^{0}(G^{\bullet})\lxr H^{-1}(G^{\bullet})[2].$$
\end{proof}
\end{lem}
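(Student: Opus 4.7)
The plan is to build the triangle from a $G$-exact short exact sequence of complexes in $C^{b}(A)$, and then to invoke the standard recipe recalled in Section 1.1, which promotes any such sequence to a distinguished triangle in $D_{gp}^{b}(A)$. Concretely, I would let $K^{\bullet}$ be the subcomplex of $G^{\bullet}$ with ${\rm Ker}\,d^{1}$ in degree $-1$ and zero elsewhere, so that $K^{\bullet}=H^{-1}(G^{\bullet})[1]$, and let $G^{\prime\bullet}=({\rm Im}\,d^{1}\lxr G_{0})$ in degrees $-1,0$ be the quotient. This produces a termwise short exact sequence
$$0\lxr H^{-1}(G^{\bullet})[1]\lxr G^{\bullet}\lxr G^{\prime\bullet}\lxr 0$$
in $C^{b}(A)$.

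Next I would verify that this sequence is $G$-exact. In degree $0$ it is trivial; in degree $-1$ the sequence reads
$$0\lxr {\rm Ker}\,d^{1}\lxr G_{1}\lxr {\rm Im}\,d^{1}\lxr 0,$$
and surjectivity after applying ${\rm Hom}_{A}(G,-)$ for any $G\in{\rm Gproj}A$ is precisely the statement that $G_{1}\lxr {\rm Im}\,d^{1}$ is a right ${\rm Gproj}A$-approximation, which is half of condition (i) in Definition~\ref{Gscomplex}.

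The third step is to identify $G^{\prime\bullet}$ with $H^{0}(G^{\bullet})$ in $D_{gp}^{b}(A)$. The natural chain map $G^{\prime\bullet}\lxr H^{0}(G^{\bullet})$ (the projection $G_{0}\lxr {\rm Coker}\,d^{1}$ in degree $0$, and zero in degree $-1$) should be a $\mathcal{GP}$-quasi-isomorphism: the key ingredient is that the sequence $0\lxr {\rm Im}\,d^{1}\lxr G_{0}\lxr {\rm Coker}\,d^{1}\lxr 0$ becomes ${\rm Hom}_{A}(G,-)$-exact, using the other half of Definition~\ref{Gscomplex}(i), namely that $G_{0}\lxr {\rm Coker}\,d^{1}$ is a right ${\rm Gproj}A$-approximation. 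Replacing $G^{\prime\bullet}$ by $H^{0}(G^{\bullet})$ in the triangle produced by the $G$-exact sequence then yields the desired triangle.

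The main obstacle will be the $\mathcal{GP}$-quasi-isomorphism step, which requires both halves of Definition~\ref{Gscomplex}(i) simultaneously, and some care with shifts (since $H^{-1}(G^{\bullet})$ conventionally sits in degree $0$ but enters the sequence shifted by $[1]$ into degree $-1$). Once this identification is in place, the triangle is immediate from the general formalism of the Gorenstein derived category recalled in Section 1.1.
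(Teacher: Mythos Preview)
Your proposal is correct and follows essentially the same approach as the paper's proof: define $G^{\prime\bullet}=({\rm Im}\,d^{1}\lxr G_{0})$, form the short exact sequence $0\lxr {\rm Ker}\,d^{1}[1]\lxr G^{\bullet}\lxr G^{\prime\bullet}\lxr 0$ in $C^{b}(A)$, observe it is $G$-exact, and identify $G^{\prime\bullet}\cong H^{0}(G^{\bullet})$ in $D_{gp}^{b}(A)$ via a $\mathcal{GP}$-quasi-isomorphism. The paper's proof is terse and does not spell out where the two halves of Definition~\ref{Gscomplex}(i) enter; your explicit attribution of the $G_{1}\lxr{\rm Im}\,d^{1}$ approximation to the $G$-exactness check and the $G_{0}\lxr{\rm Coker}\,d^{1}$ approximation to the $\mathcal{GP}$-quasi-isomorphism is a useful clarification.
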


\vskip 10pt

\begin{lem}\
\label{functorialisomorphism}
For any $X\in {\rm mod}A$, we have a functorial isomorphism
$${\rm Hom}_{D_{gp}^{b}(A)}(G^{\bullet}, X)\cong {\rm Hom}_{A}(H^{0}(G^{\bullet}), X)$$
and a monomorphism
$${\rm Hom}_{D_{gp}^{b}(A)}(H^{0}(G^{\bullet}), X[1])\lxr {\rm Hom}_{A}(G^{\bullet}, X[1]).$$
\begin{proof}\ Applying ${\rm Hom}_{D_{gp}^{b}(A)}(-, X)$ to the triangle in Lemma ~\ref{triangle}
$$H^{-1}(G^{\bullet})[1]\lxr G^{\bullet}\lxr H^{0}(G^{\bullet})\lxr H^{-1}(G^{\bullet})[2]$$
and using that there is no non-zero negative extensions between modules, we get the required isomorphism and monomorphism.
\end{proof}
\end{lem}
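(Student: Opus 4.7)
The plan is to apply the cohomological functor ${\rm Hom}_{D_{gp}^{b}(A)}(-, X)$ to the distinguished triangle provided by Lemma~\ref{triangle}, namely
\[H^{-1}(G^{\bullet})[1]\lxr G^{\bullet}\lxr H^{0}(G^{\bullet})\lxr H^{-1}(G^{\bullet})[2],\]
and to extract both statements from the resulting long exact sequence. Concretely I would display the two segments
\[{\rm Hom}(H^{-1}(G^{\bullet})[1], X[-1])\lxr {\rm Hom}(H^{0}(G^{\bullet}), X)\lxr {\rm Hom}(G^{\bullet}, X)\lxr {\rm Hom}(H^{-1}(G^{\bullet})[1], X)\]
and
\[{\rm Hom}(H^{-1}(G^{\bullet})[1], X)\lxr {\rm Hom}(H^{0}(G^{\bullet}), X[1])\lxr {\rm Hom}(G^{\bullet}, X[1]),\]
all computed in $D_{gp}^{b}(A)$, and then kill the flanking terms.

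The key input, which is precisely the fact flagged in the author's one-line proof, is that the Gorenstein derived category admits \emph{no non-zero negative extensions between ordinary modules}: ${\rm Hom}_{D_{gp}^{b}(A)}(M, N[i])=0$ for $M, N\in {\rm mod}A$ and $i<0$. Specialising to $M=H^{-1}(G^{\bullet})$ and $N=X$, the two terms ${\rm Hom}_{D_{gp}^{b}(A)}(H^{-1}(G^{\bullet})[1], X[-1])\cong {\rm Hom}_{D_{gp}^{b}(A)}(H^{-1}(G^{\bullet}), X[-2])$ and ${\rm Hom}_{D_{gp}^{b}(A)}(H^{-1}(G^{\bullet})[1], X)\cong {\rm Hom}_{D_{gp}^{b}(A)}(H^{-1}(G^{\bullet}), X[-1])$ vanish, and the long exact sequence collapses to a bijection ${\rm Hom}_{D_{gp}^{b}(A)}(H^{0}(G^{\bullet}), X)\cong {\rm Hom}_{D_{gp}^{b}(A)}(G^{\bullet}, X)$ and to an injection ${\rm Hom}_{D_{gp}^{b}(A)}(H^{0}(G^{\bullet}), X[1])\hookrightarrow {\rm Hom}_{D_{gp}^{b}(A)}(G^{\bullet}, X[1])$. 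Finally, since $H^{0}(G^{\bullet})$ and $X$ both sit in degree zero, ${\rm Hom}_{D_{gp}^{b}(A)}(H^{0}(G^{\bullet}), X)$ is canonically identified with ${\rm Hom}_{A}(H^{0}(G^{\bullet}), X)$, giving the stated form of the isomorphism. Functoriality in $X$ is inherited from the long exact sequence of the triangle and requires no separate argument.

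The only step that really needs care is the vanishing of negative $\mathrm{Hom}$-groups in $D_{gp}^{b}(A)$; I would invoke this directly from \cite{GZ} rather than reprove it, since it follows from the standard roof calculus together with the observation that a $\mathcal{GP}$-quasi-isomorphic resolution of a module concentrated in degree zero admits no non-zero chain maps into a module placed in strictly negative degree. Once that input is cited, the proof is a mechanical read-off from the long exact sequence, which is why I expect the write-up to be very short.
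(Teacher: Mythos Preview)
Your proposal is correct and follows exactly the same approach as the paper: apply ${\rm Hom}_{D_{gp}^{b}(A)}(-,X)$ to the triangle of Lemma~\ref{triangle} and use the vanishing of negative extensions between modules in $D_{gp}^{b}(A)$ to collapse the long exact sequence. Your write-up is simply a more explicit unpacking of the paper's one-line argument, with the identification ${\rm Hom}_{D_{gp}^{b}(A)}(H^{0}(G^{\bullet}),X)\cong {\rm Hom}_{A}(H^{0}(G^{\bullet}),X)$ spelled out.
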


\vskip 10pt

\begin{thm}\
\label{torsionpair}\ The following are equivalent for a complex $G^{\bullet}: G_{1}\lxr G_{0}$ with $G_{i}\in {\rm Gproj}A$.

\vskip 5pt

\begin{enumerate}
\item $G^{\bullet}$ is a 2-term Gorenstein silting complex in $D_{gp}^{b}(A)$.

\vskip 5pt

\item $\mathcal{T}(G^{\bullet})\cap \mathcal{F}(G^{\bullet})={0}$ and $H^{0}(G^{\bullet})\in \mathcal{T}(G^{\bullet})$.

\vskip 5pt

\item $\mathcal{T}(G^{\bullet})\cap \mathcal{F}(G^{\bullet})={0}$ and $t(X)\in \mathcal{T}(G^{\bullet}), \ X/tX\in \mathcal{F}(G^{\bullet})$ for all $X\in {\rm mod}A$.

\vskip 5pt

\item $(\mathcal{T}(G^{\bullet}), \mathcal{F}(G^{\bullet}))$ is a torsion pair for ${\rm mod}A$.
\end{enumerate}
\end{thm}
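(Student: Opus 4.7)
The plan is to establish the cycle $(1) \Rightarrow (4) \Rightarrow (3) \Rightarrow (2) \Rightarrow (1)$, with the t-structure technology of Proposition~\ref{heart} providing the shortest route from $(1)$ to $(4)$, and the hard direction $(2) \Rightarrow (1)$ reconstructing the full silting structure from minimal rigidity data.

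For $(1) \Rightarrow (4)$: for any $X \in {\rm mod}A$, the t-structure $(D_{gp}^{\leq 0}(G^{\bullet}),\ D_{gp}^{\geq 0}(G^{\bullet}))$ yields a truncation triangle $X^{\leq 0} \lxr X \lxr X^{\geq 1} \lxr X^{\leq 0}[1]$. By the cohomological description of the heart in Proposition~\ref{heart}(2), setting $T := H^{0}(X^{\leq 0})$ and $F := H^{0}(X^{\geq 1})$ produces $T \in \mathcal{T}(G^{\bullet})$, $F \in \mathcal{F}(G^{\bullet})$, and an exact sequence $0 \lxr T \lxr X \lxr F \lxr 0$ in ${\rm mod}A$. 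The Hom vanishing ${\rm Hom}_A(\mathcal{T}, \mathcal{F}) = 0$ follows from the t-structure axiom ${\rm Hom}_{D_{gp}^{b}(A)}(D_{gp}^{\leq 0},\ D_{gp}^{\geq 1}) = 0$ and the faithful embedding ${\rm mod}A \hookrightarrow D_{gp}^{b}(A)$ (which holds since $A \in {\rm Gproj}A$). The two remaining torsion pair axioms follow directly from the decomposition: if ${\rm Hom}_A(M, -)|_{\mathcal{F}} = 0$ then the projection $M \twoheadrightarrow M/tM \in \mathcal{F}$ is zero, so $M = tM \in \mathcal{T}$, and dually.

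For $(4) \Rightarrow (3) \Rightarrow (2)$: these are the easy directions. A torsion pair supplies the decomposition for every $X$, and uniqueness of the torsion subobject combined with the universal property encoded in Lemma~\ref{functorialisomorphism} and the trace lemma preceding Lemma~\ref{triangle} identifies it with $tX$. The implication $(3) \Rightarrow (2)$ is immediate: $\mathcal{T} \cap \mathcal{F} = 0$ is part of $(3)$, and since ${\rm id}_{H^{0}(G^{\bullet})}$ is itself a map in ${\rm Hom}_A(H^{0}(G^{\bullet}), H^{0}(G^{\bullet}))$, its image forces $H^{0}(G^{\bullet}) \subseteq t H^{0}(G^{\bullet})$, whence $H^{0}(G^{\bullet}) = t H^{0}(G^{\bullet}) \in \mathcal{T}$ by $(3)$.

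The main obstacle is $(2) \Rightarrow (1)$. Rigidity ${\rm Hom}_{D_{gp}^{b}(A)}(G^{\bullet}, G^{\bullet}[1]) = 0$ is derived by applying ${\rm Hom}(G^{\bullet}, -[1])$ to the triangle of Lemma~\ref{triangle}: the hypothesis $H^{0}(G^{\bullet}) \in \mathcal{T}$ kills ${\rm Hom}(G^{\bullet}, H^{0}(G^{\bullet})[1])$, while Lemma~\ref{exactseq} applied twice kills ${\rm Hom}(G^{\bullet}, H^{-1}(G^{\bullet})[n])$ for $n \geq 2$ (both bracketing terms vanish because $H^{-1}(G^{\bullet})$ is concentrated in a single degree). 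The approximation conditions of Definition~\ref{Gscomplex}(i) must be extracted from the rigidity and intersection data by analyzing ${\rm Hom}_A(E, -)$ for $E \in {\rm Gproj}A$; concretely, one shows the presentation $G_{1} \lxr G_{0} \lxr H^{0}(G^{\bullet}) \lxr 0$ is proper in the Gorenstein-projective sense, which forces both $G_{1} \to {\rm Im}\,d^{1}$ and $G_{0} \to {\rm Coker}\,d^{1}$ to be right ${\rm Gproj}A$-approximations. Finally, the thick-generation condition ${\rm thick}(G^{\bullet}) = K^{b}({\rm Gproj}A)$ is the hardest step: in the spirit of Proposition~\ref{heart}(4), for each $E \in {\rm Gproj}A$ one constructs a triangle $E \lxr G^{\prime\bullet} \lxr G^{\prime\prime\bullet} \lxr E[1]$ with $G^{\prime\bullet}, G^{\prime\prime\bullet} \in {\rm add}(G^{\bullet})$ by taking a right ${\rm add}(G^{\bullet})$-approximation of $E[1]$ and extending, and the machinery $\mathcal{T} \cap \mathcal{F} = 0$ paired with the rigidity just established ensures this places $E$ inside ${\rm thick}(G^{\bullet})$.
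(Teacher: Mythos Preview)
Your cycle $(1)\Rightarrow(4)\Rightarrow(3)\Rightarrow(2)\Rightarrow(1)$ is organised differently from the paper, which proves $(1)\Leftrightarrow(2)$ directly and then closes the short loop $(2)\Rightarrow(3)\Rightarrow(4)\Rightarrow(2)$. The substantive divergence is in how the bridge between $(1)$ and $(2)$ is built. The paper's argument is essentially a two-line application of Lemma~\ref{exactseq}: taking $X^\bullet=G^\bullet$ and $n=1$ there gives ${\rm Hom}_{D^b_{gp}(A)}(G^\bullet,G^\bullet[1])\cong{\rm Hom}_{D^b_{gp}(A)}(G^\bullet,H^0(G^\bullet)[1])$ (since $H^1(G^\bullet)=0$), so rigidity is \emph{equivalent} to $H^0(G^\bullet)\in\mathcal{T}(G^\bullet)$ in one stroke. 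For generation, the paper simply observes that $\mathcal{T}\cap\mathcal{F}=0$ says ${\rm Hom}(G^\bullet,-[n])$ detects nonzero modules, and Lemma~\ref{exactseq} then upgrades this to detecting nonzero complexes via their cohomology---the weak-generation form of ${\rm thick}(G^\bullet)=K^b({\rm Gproj}A)$. No t-structure, no Lemma~\ref{triangle}, no approximation triangles are invoked.

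Your $(2)\Rightarrow(1)$, by contrast, contains a circularity. You derive rigidity by applying ${\rm Hom}(G^\bullet,-[1])$ to the triangle of Lemma~\ref{triangle}, but the proof of that lemma already consumes condition~(i) of Definition~\ref{Gscomplex}: the $G$-exactness of $0\to{\rm Ker}\,d^1[1]\to G^\bullet\to G'^\bullet\to 0$ needs $G_1\to{\rm Im}\,d^1$ to be a right ${\rm Gproj}A$-approximation, and the $\mathcal{GP}$-quasi-isomorphism $G'^\bullet\simeq H^0(G^\bullet)$ needs $G_0\to{\rm Coker}\,d^1$ to be one. Since you only propose to ``extract'' condition~(i) \emph{after} establishing rigidity, the argument as written is circular. (The paper, for its part, is silent on condition~(i) in this proof; whether $(2)$ alone genuinely forces~(i) is not clear, and your sketch does not settle it.) Your thick-generation argument via right ${\rm add}(G^\bullet)$-approximations of $E[1]$ is heavier than needed: the paper's weak-generation step replaces it in one line.

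Your $(1)\Rightarrow(4)$ via the t-structure truncation is a legitimate and more conceptual alternative to the paper's route $(1)\Rightarrow(2)\Rightarrow(3)\Rightarrow(4)$, and your $(4)\Rightarrow(3)\Rightarrow(2)$ is fine. But for $(2)\Rightarrow(1)$ you should drop Lemma~\ref{triangle} and argue directly from Lemma~\ref{exactseq} as the paper does; this both removes the circularity and shortens the proof considerably.
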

\begin{proof}\ ${\rm (1)\Leftrightarrow (2)}$\ ${\rm Hom}_{D_{gp}^{b}(A)}(G^{\bullet}, G^{\bullet}[i])=0$ for all $i>0$ if and only if $H^{0}(G^{\bullet})\in \mathcal{T}(G^{\bullet})$ by Lemma~\ref{exactseq}. For any $X\in \mathcal{T}(G^{\bullet})\cap \mathcal{F}(G^{\bullet}),\ {\rm Hom}_{D_{gp}^{b}(A)}(G^{\bullet}, X[n])=0$ for all $n\in \mathbb{Z}$ and hence $X=0$. Conversely, let $X^{\bullet}\in D_{gp}^{b}(A)$ with ${\rm Hom}_{D_{gp}^{b}(A)}(G^{\bullet}, X[n])=0$ for all $n\in \mathbb{Z}$. Then by Lemma~\ref{exactseq}, $H^{n}(X^{\bullet})\in \mathcal{T}(G^{\bullet})\cap \mathcal{F}(G^{\bullet})={0}$.

\vskip 5pt

${\rm (2)\Rightarrow (3)}$\ Let $X\in {\rm mod}A$. Since $H^{0}(G^{\bullet})\in \mathcal{T}(G^{\bullet})$, it follows that $tX\in \mathcal{T}(G^{\bullet})$. Next, since  there is an isomorphism by Lemma~\ref{functorialisomorphism}
$${\rm Hom}_{D_{gp}^{b}(A)}(G^{\bullet}, X/tX)\cong {\rm Hom}_{A}(H^{0}(G^{\bullet}), X/tX),$$
and ${\rm Hom}_{A}(H^{0}(G^{\bullet}), i_{X})$ is an isomorphism, it follows that ${\rm Hom}_{D_{gp}^{b}(A)}(G^{\bullet}, X/tX)=0$ and hence $ X/tX\in \mathcal{F}(G^{\bullet})$.

\vskip 5pt

${\rm (3)\Rightarrow (4)}$\ It can be obtained by the definition.

\vskip 5pt

${\rm (4)\Rightarrow (2)}$\ We just need to prove that $H^{0}(G^{\bullet})\in \mathcal{T}(G^{\bullet})$. By Lemma~\ref{functorialisomorphism}
$$0={\rm Hom}_{D_{gp}^{b}(A)}(G^{\bullet}, \mathcal{F}(G^{\bullet}))\cong {\rm Hom}_{A}(H^{0}(G^{\bullet}), \mathcal{F}(G^{\bullet})),$$
it follows from $(\mathcal{T}(G^{\bullet}), \mathcal{F}(G^{\bullet}))$ is a torsion pair that $H^{0}(G^{\bullet})\in \mathcal{T}(G^{\bullet})$.
\end{proof}

\vskip 10pt

\begin{rem}\
\label{torequi}\
Note that the torsion pair $(\mathcal{T}(G^{\bullet}), \mathcal{F}(G^{\bullet}))$ coincides with $(D_{\theta}, T^{\perp 0})$ defined in the subsection 2.1.
\end{rem}
\begin{proof}\ Let $G^{\bullet}: G_{1}\stackrel{\theta}{\lxr} G_{0}$ be a 2-term Gorenstein silting complex in $D_{gp}^{b}(A)$, and $T=H^{0}(G^{\bullet})={\rm Coker}\theta$.  On one hand, consider the distinguished triangle in $D_{gp}^{b}(A)$
$$G_{1}\stackrel{\theta}{\lxr} G_{0}\lxr G^{\bullet}\lxr G_{1}[1].$$
Applying the functor ${\rm Hom}_{D_{gp}^{b}(A)}(-, X)$ for any $A\mbox{-}$module $X$, there is the induced exact sequence
$${\rm Hom}_{D_{gp}^{b}(A)}(G_{0}, X)\lxr {\rm Hom}_{D_{gp}^{b}(A)}(G_{1}, X)\lxr {\rm Hom}_{D_{gp}^{b}(A)}(G^{\bullet}[-1], X)\lxr 0.$$
Since ${\rm Hom}_{D_{gp}^{b}(A)}(G_{i}, X)\cong {\rm Hom}_{A}(G_{i}, X)$ with $i=0,1$, we get that $X\in D_{\theta}$ if and only if ${\rm Hom}_{D_{gp}^{b}(A)}(G^{\bullet}, X[1])=0$ if and only if $X\in \mathcal{T}(G^{\bullet})$.

\vskip 5pt

On the other hand, since
$${\rm Hom}_{A}(T, X)\cong {\rm Hom}_{D_{gp}^{b}(A)}(T, X)\cong {\rm Hom}_{D_{gp}^{b}(A)}(G^{\bullet}, X),$$
we get that $X\in T^{\perp 0}$ if and only if $X\in \mathcal{F}(G^{\bullet})$.
\end{proof}

\vskip 10pt

\begin{prop}\ Let $G^{\bullet}$ be a 2-term Gorenstein silting complex in $D_{gp}^{b}(A)$ and $(\mathcal{T}(G^{\bullet}), $ $ \mathcal{F}(G^{\bullet}))$ the torsion pair induced by $G^{\bullet}$.

\vskip 5pt

\begin{enumerate}
\item For any $X\in {\rm mod}A$,  $X\in {\rm add}H^{0}(G^{\bullet})$ if and only if $X$ is ${\rm Ext}$-projective in $\mathcal{T}(G^{\bullet})$.

\vskip 5pt

\item For any $X\in \mathcal{T}(G^{\bullet})$, there is a $G$-exact sequence $0\lxr L\lxr T_{0}\lxr X\lxr 0$ with $T_{0}\in {\rm add}H^{0}(G^{\bullet})$ and $L\in \mathcal{T}(G^{\bullet})$.
\end{enumerate}
\end{prop}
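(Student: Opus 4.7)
The plan is to prove (2) first and then deduce (1) by a splitting argument. Set $T := H^{0}(G^{\bullet})$ and $\theta := d^{1}$. For (2), I start with $X \in \mathcal{T}(G^{\bullet})$ and choose a right ${\rm Gproj}A$-approximation $p \colon E \to X$; since projectives belong to ${\rm Gproj}A$, this $p$ is automatically surjective and a $G$-epimorphism. Apply Proposition~\ref{heart}(4) to $E$ to obtain a triangle $E \xrightarrow{e} G'^{\bullet} \to G''^{\bullet} \to E[1]$ in $D_{gp}^{b}(A)$ with $G'^{\bullet}, G''^{\bullet} \in {\rm add}G^{\bullet}$. Applying ${\rm Hom}_{D_{gp}^{b}(A)}(-, X)$ and using $X \in \mathcal{T}(G^{\bullet})$ to kill ${\rm Hom}_{D_{gp}^{b}(A)}(G''^{\bullet}, X[1])$, the map $p$ lifts to some $\tilde{p} \colon G'^{\bullet} \to X$ with $\tilde{p} e = p$ in $D_{gp}^{b}(A)$. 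By Lemma~\ref{functorialisomorphism}, $\tilde{p}$ corresponds to a module map $\bar{p}' \colon T_{0}' \to X$ with $T_{0}' := H^{0}(G'^{\bullet}) \in {\rm add}T$; taking $H^{0}$ of $\tilde{p} e = p$ gives a factorization $p = \bar{p}' \bar{e}$ in ${\rm mod}A$, so $\bar{p}'$ is surjective and $G$-epic.

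To control the kernel, I enlarge $T_{0}'$: let $h_{1}, \dots, h_{m}$ be a $k$-basis of ${\rm Hom}_{A}(T, X)$ (finite since $T, X \in {\rm mod}A$), set $T_{0} := T_{0}' \oplus T^{m} \in {\rm add}T$, and define $\bar{p} := \bar{p}' \oplus (h_{1}, \dots, h_{m}) \colon T_{0} \to X$. Since $\bar{p}'$ is $G$-epic, so is $\bar{p}$; moreover, every $h \in {\rm Hom}_{A}(T, X)$ lifts through $\bar{p}$ via the $T^{m}$ summand, forcing ${\rm Hom}_{A}(T, T_{0}) \to {\rm Hom}_{A}(T, X)$ to be surjective. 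Let $L = \ker \bar{p}$; the resulting $G$-exact sequence $0 \to L \to T_{0} \to X \to 0$ becomes a triangle in $D_{gp}^{b}(A)$, to which I apply ${\rm Hom}_{D_{gp}^{b}(A)}(G^{\bullet}, -)$. Since $T_{0} \in {\rm add}T \subseteq \mathcal{T}(G^{\bullet})$ (by Theorem~\ref{torsionpair}), one has ${\rm Hom}_{D_{gp}^{b}(A)}(G^{\bullet}, T_{0}[1]) = 0$, and Lemma~\ref{functorialisomorphism} identifies the relevant connecting map with the surjection above. The long exact sequence then forces ${\rm Hom}_{D_{gp}^{b}(A)}(G^{\bullet}, L[1]) = 0$, so $L \in \mathcal{T}(G^{\bullet})$.

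For (1), the direction ${\rm add}T \subseteq \{\text{Ext-projectives in }\mathcal{T}(G^{\bullet})\}$ is immediate from Lemma~\ref{silting}(ii): $\mathcal{T}(G^{\bullet}) = D_{\theta} \subseteq T^{G\perp}$, hence ${\rm Gext}_{A}^{1}(T, Y) = 0$ for all $Y \in \mathcal{T}(G^{\bullet})$, and additivity transfers this vanishing to every $X \in {\rm add}T$. Conversely, for an Ext-projective $X \in \mathcal{T}(G^{\bullet})$, invoke (2) to obtain a $G$-exact sequence $0 \to L \to T_{0} \to X \to 0$ with $T_{0} \in {\rm add}T$ and $L \in \mathcal{T}(G^{\bullet})$; recalling from subsection 2.1 that $G$-exact sequences with ends $X, L$ are classified by ${\rm Gext}_{A}^{1}(X, L)$, the hypothesis gives ${\rm Gext}_{A}^{1}(X, L) = 0$, the sequence splits, and $X$ is a direct summand of $T_{0}$, whence $X \in {\rm add}T$.

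The main obstacle is producing a $G$-epimorphism $T_{0} \to X$ which is simultaneously $G$-epic (required for the $G$-exactness of the desired sequence) and induces a surjection on ${\rm Hom}_{A}(T, -)$ (required to force the kernel into $\mathcal{T}(G^{\bullet})$). The candidate $T_{0}'$ built from the ${\rm Gproj}A$-approximation satisfies only the first property, while the candidate $T^{m}$ built from a basis of ${\rm Hom}_{A}(T, X)$ satisfies only the second; forming the direct sum combines both features and resolves the obstruction.
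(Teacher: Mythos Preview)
Your argument is correct and differs from the paper's in one substantive respect: how you construct the $G$-epimorphism $T_0 \to X$ with $T_0 \in {\rm add}T$. The paper simply asserts $\mathcal{T}(G^\bullet) = {\rm Fac}\,H^0(G^\bullet)$ and then takes a right ${\rm add}H^0(G^\bullet)$-approximation $\alpha: T_0 \to X$; this map is automatically ${\rm Hom}_A(T,-)$-surjective, and the asserted equality furnishes a $G$-epic $T^n \to X$ that factors through $\alpha$, forcing $\alpha$ to be $G$-epic as well. So the paper obtains both properties in one stroke, but at the cost of invoking $\mathcal{T}(G^\bullet) = {\rm Fac}\,H^0(G^\bullet)$, which is not established at this point and is in fact only derived later (in the proof of the correspondence between Gorenstein silting modules and complexes) \emph{using} this very proposition. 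Your route---producing the $G$-epic part from Proposition~\ref{heart}(4) applied to a ${\rm Gproj}A$-approximation, then adjoining a $T^m$ summand to enforce ${\rm Hom}_A(T,-)$-surjectivity---is longer but self-contained and sidesteps that forward reference. For the ``${\rm add}T$ is Ext-projective'' direction of (1), you go through Remark~\ref{torequi} and Lemma~\ref{silting}(ii), whereas the paper uses the monomorphism of Lemma~\ref{functorialisomorphism}; these are equally short and amount to the same vanishing. The splitting argument for the converse of (1) is identical in both.
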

\begin{proof}\ Assume that $X$ is Ext-projective in $\mathcal{T}(G^{\bullet})$. Since $\mathcal{T}(G^{\bullet})={\rm Fac}H^{0}(G^{\bullet})$, there is a G-exact sequence
$$0\lxr L\lxr T_{0}\stackrel{\alpha}{\lxr} X\lxr 0,\eqno(**)$$
where $T_{0}\stackrel{\alpha}{\lxr} X$ is a right ${\rm add}H^{0}(G^{\bullet})\mbox{-}$approximation. Since ${\rm Hom}_{A}(H^{0}(G^{\bullet}),\ \alpha)$ is an epimorphism, we have that ${\rm Hom}_{D_{gp}^{b}(A)}(G^{\bullet}, \alpha)$ is an epimorphism. Applying ${\rm Hom}_{D_{gp}^{b}(A)}(G^{\bullet}, -)$ to $(**)$, we have an exact sequence
$${\rm Hom}_{D_{gp}^{b}(A)}(G^{\bullet}, T_{0}) \xrightarrow{{\rm Hom}_{D_{gp}^{b}(A)}(G^{\bullet}, \alpha)} {\rm Hom}_{D_{gp}^{b}(A)}(G^{\bullet}, X)\lxr {\rm Hom}_{D_{gp}^{b}(A)}(G^{\bullet}, L[1])\lxr 0.$$
Then ${\rm Hom}_{D_{gp}^{b}(A)}(G^{\bullet}, L[1])=0$ which implies that $L$ is in $\mathcal{T}(G^{\bullet})$. Thus, by assumption, the sequence $(**)$ splits, and hence $X$ is in ${\rm add}H^{0}(G^{\bullet})$.

\vskip 5pt

By the monomorphism in Lemma~\ref{functorialisomorphism}, we have that ${\rm add}H^{0}(G^{\bullet})$ is Ext-projective in $\mathcal{T}(G^{\bullet})$.
\end{proof}

\vskip 10pt

\begin{thm}\ Suppose that $A$ is a Gorenstein algebra of finite CM-type with the Gorenstein projective generator $G$, and ${\rm A(Gproj)}={\rm End}_{A}(E)^{\rm op}$. Let $G^{\bullet}: G_{1}\stackrel{\theta}{\lxr} G_{0}$ be a 2-term complex in $D_{gp}^{b}(A)$, and $T=H^{0}(G^{\bullet})={\rm Coker}\theta$. Then the following statements are equivalent.

\vskip 5pt

\begin{enumerate}
\item $T$ is a Gorenstein silting module with respect to $\theta$ in ${\rm mod}A$.

\vskip 5pt

\item $G^{\bullet}: G_{1}\stackrel{\theta}{\lxr} G_{0}$ is a 2-term Gorenstein silting complex in $D_{gp}^{b}(A)$.
\end{enumerate}
\end{thm}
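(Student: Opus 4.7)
The plan is to combine Remark~\ref{torequi}, which identifies $\mathcal{T}(G^{\bullet}) = D_{\theta}$, with Theorem~\ref{torsionpair} (the torsion-pair characterisation of 2-term Gorenstein silting complexes) and Lemma~\ref{exactseq} (translating cohomology into Hom in $D_{gp}^{b}(A)$). Each direction will first reduce condition-by-condition to statements already packaged in these tools, and then tackle a single nontrivial inclusion.

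For the direction (1)~$\Rightarrow$~(2), I would first derive condition~(i) of Definition~\ref{Gscomplex}: since $\theta$ is a proper Gorenstein-projective presentation of $T$, the sequence ${\rm Hom}_{A}(G, G_{1}) \lxr {\rm Hom}_{A}(G, G_{0}) \lxr {\rm Hom}_{A}(G, T) \lxr 0$ is exact for every $G \in {\rm Gproj}\,A$. Surjectivity of the right arrow says $G_{0} \lxr T$ is a right ${\rm Gproj}\,A$-approximation; identifying the image of the left arrow with the kernel of the right one, which itself coincides with ${\rm Hom}_{A}(G, {\rm Im}\,\theta)$ via the monomorphism ${\rm Im}\,\theta \hookrightarrow G_{0}$, yields that $G_{1} \lxr {\rm Im}\,\theta$ is also a right ${\rm Gproj}\,A$-approximation. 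Condition~(ii) is handled by applying Lemma~\ref{exactseq} with $X^{\bullet} = G^{\bullet}$ and $n = 1$: since $H^{1}(G^{\bullet}) = 0$, we obtain ${\rm Hom}_{D_{gp}^{b}(A)}(G^{\bullet}, G^{\bullet}[1]) \cong {\rm Hom}_{D_{gp}^{b}(A)}(G^{\bullet}, T[1])$, which vanishes iff $T \in \mathcal{T}(G^{\bullet}) = D_{\theta}$; this is (Gs2), a consequence of ${\rm Gen}_{G}(T) = D_{\theta}$. For the thick condition ${\rm thick}\,G^{\bullet} = K^{b}({\rm Gproj}\,A)$, I would use that $E \in {\rm Gproj}\,A \cap D_{\theta} = {\rm Gen}_{G}(T)$ to produce $G$-exact ${\rm Add}\,T$-presentations of $E$, which together with the triangle $G_{1} \lxr G_{0} \lxr G^{\bullet} \lxr G_{1}[1]$ places $E$, hence all of ${\rm Gproj}\,A$, in ${\rm thick}\,G^{\bullet}$.

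The direction (2)~$\Rightarrow$~(1) runs the same chain in reverse. Condition~(i) of Definition~\ref{Gscomplex} gives, by splicing the approximation short exact sequences $G_{1} \lxr {\rm Im}\,\theta \lxr 0$ and $0 \lxr {\rm Im}\,\theta \lxr G_{0} \lxr T \lxr 0$ after ${\rm Hom}_{A}(G, -)$, that $\theta$ is a proper Gorenstein-projective presentation of $T$. Condition~(ii) combined with Lemma~\ref{exactseq} again yields $T \in \mathcal{T}(G^{\bullet}) = D_{\theta}$, i.e.\ (Gs2). By Theorem~\ref{torsionpair}, $(\mathcal{T}(G^{\bullet}), \mathcal{F}(G^{\bullet}))$ is a torsion pair on ${\rm mod}\,A$, so $D_{\theta}$ is closed under $G$-epimorphic images and $G$-extensions; closure under coproducts is automatic because $G_{0}, G_{1}$ are finitely generated. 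This establishes (Gs1). For ${\rm Gen}_{G}(T) = D_{\theta}$, the inclusion ${\rm Gen}_{G}(T) \subseteq D_{\theta}$ follows from $T \in D_{\theta}$ together with the closure properties. For the reverse, given $X \in D_{\theta}$, choose a $G$-epimorphism $E^{m} \twoheadrightarrow X$ (available because $E$ is a Gorenstein projective generator) and use the triangle $(\triangle_{G^{\bullet}})$ of Proposition~\ref{heart}(4), together with $X \in \mathcal{T}(G^{\bullet})$ killing the Hom into $X[1]$, to factor this surjection through an object of ${\rm Add}\,T$.

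The main obstacle is the single calculation that appears twice in disguise: the inclusion $D_{\theta} \subseteq {\rm Gen}_{G}(T)$ in (2)~$\Rightarrow$~(1), and the thick generation ${\rm thick}\,G^{\bullet} = K^{b}({\rm Gproj}\,A)$ in (1)~$\Rightarrow$~(2). Both reduce to the statement that every object of $D_{\theta}$ admits an ${\rm Add}\,T$-presentation, and both are unlocked by applying the triangle $(\triangle_{G^{\bullet}})$ to the Gorenstein projective generator $E$ of the finite CM-type algebra $A$. Everything else is bookkeeping that routes through Remark~\ref{torequi} and Theorem~\ref{torsionpair}.
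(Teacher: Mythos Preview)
Your direction (1)$\Rightarrow$(2) has a genuine gap at the thick-generation step. The assertion $E\in{\rm Gproj}A\cap D_{\theta}$ is false in general: take $A$ self-injective (so ${\rm Gproj}A={\rm mod}A$ and Gorenstein silting coincides with ordinary silting) and $T$ a support $\tau$-tilting module that is not tilting; then $D_{\theta}={\rm Gen}(T)\subsetneq{\rm mod}A$ and $A\notin D_{\theta}$, so the generator $E$ (which contains $A$ as a summand) is not in $D_{\theta}$. Your fallback in the last paragraph, invoking the triangle $(\triangle_{G^{\bullet}})$, is circular here: Proposition~\ref{heart}(4) is stated and proved under the hypothesis that $G^{\bullet}$ is already a 2-term Gorenstein silting complex, which is precisely what you are trying to establish. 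Even granting an ${\rm Add}T$-cover of $E$, that alone would not place $E$ in ${\rm thick}\,G^{\bullet}$; you would still need to control the kernel, and $D_{\theta}$ is not closed under $G$-submodules.

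The paper avoids this by \emph{not} verifying Definition~\ref{Gscomplex} directly. It checks condition~(2) of Theorem~\ref{torsionpair}: $T=H^{0}(G^{\bullet})\in\mathcal{T}(G^{\bullet})$ comes from Remark~\ref{torequi}, and $\mathcal{T}(G^{\bullet})\cap\mathcal{F}(G^{\bullet})=0$ is immediate from ${\rm Gen}_{G}(T)\cap T^{\perp 0}=0$. Theorem~\ref{torsionpair} then delivers the full silting conclusion, thick condition included, via the argument that any $X^{\bullet}$ with ${\rm Hom}_{D^{b}_{gp}(A)}(G^{\bullet},X^{\bullet}[n])=0$ for all $n$ has $H^{n}(X^{\bullet})\in\mathcal{T}(G^{\bullet})\cap\mathcal{F}(G^{\bullet})=0$. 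For (2)$\Rightarrow$(1), the paper first proves the partial-level equivalence (partial Gorenstein silting module $\Leftrightarrow$ 2-term partial Gorenstein silting complex) by passing through the CM-Auslander algebra via ${\rm Hom}_{A}(E,-)$, and then cites Proposition~3.9 for $D_{\theta}\subseteq{\rm Gen}_{G}(T)$. Your route through $(\triangle_{G^{\bullet}})$ is legitimate in this direction since $G^{\bullet}$ is assumed silting, and it does give the same conclusion once you identify ${\rm Hom}_{D^{b}_{gp}(A)}(G'^{\bullet},X)\cong{\rm Hom}_{A}(H^{0}(G'^{\bullet}),X)$ via Lemma~\ref{functorialisomorphism}; but simply citing Proposition~3.9 would be shorter.
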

\begin{proof}\ First, we claim that $T$ is a partial Gorenstein silting module with respect to $\theta$ if and only if $G^{\bullet}$ is a 2-term partial Gorenstein silting complex.

\vskip 5pt

Assume that $T$ is a partial Gorenstein silting module. By definition we know that ${\rm Hom}_{A}(\theta, T)$ is an epimorphism. Then we get that ${\rm Hom}_{{\rm A(Gproj)}}((E,\theta), (E, T))$ is an epimorphism from the proof of Proposition ~\ref{properties}(1). Let $\sigma:= {\rm Hom}_{A}(E, \theta)$. By the following diagram,
\[
\xymatrix@C=1.5cm{
&  {\rm Hom}_{A}(E, G_{1}) \ar[r]^{\sigma} \ar[dr]^{f} \ar@{-->}[d]^{h} \ar@{-->}[dl]^{s_{1}}  &  {\rm Hom}_{A}(E, G_{0}) \ar@{-->}[d]^{g} \ar@{-->}[dl]^{s_{0}} \\
{\rm Hom}_{A}(E, G_{1}) \ar[r]_{\sigma}  &  {\rm Hom}_{A}(E, G_{0}) \ar[r]_{\pi} & {\rm Hom}_{A}(E, T) \\
}
\]
there exists a morphism $g:{\rm Hom}_{A}(E, G_{0})\lxr {\rm Hom}_{A}(E, T)$, such that $f=g\sigma$. Since ${\rm Hom}_{A}(G, G_{1})$ is projective, there exists a morphism $$h: {\rm Hom}_{A}(E, G_{1})\lxr {\rm Hom}_{A}(E, G_{0}),$$
such that $f=\pi h$. Similarly since ${\rm Hom}_{A}(E, G_{0})$ is projective, there exists a morphism
$$s_{0}: {\rm Hom}_{A}(E, G_{0})\lxr {\rm Hom}_{A}(E, G_{0}),$$
such that $g=\pi s_{0}$. Therefore $\pi h=\pi s_{0}\sigma$, i.e., $\pi (h-s_{0}\sigma)=0$. It follows that there is
$$s_{1}: {\rm Hom}_{A}(E, G_{1})\lxr {\rm Hom}_{A}(E, G_{1}),$$
such that $h-s_{0}\sigma=\sigma s_{1}$, which shows that $h$ is null-homotopic. This implies that ${\rm Hom}_{{\rm A(Gproj)}}((E, G^{\bullet}),\ (E, G^{\bullet})[1])=0$. Therefore, we get that ${\rm Hom}_{D_{gp}^{b}(A)}(G^{\bullet}, G^{\bullet}[1])=0$. This implies that $G^{\bullet}$ is a 2-term partial Gorenstein silting complex.

\vskip 5pt

Conversely, if $G^{\bullet}$ is a 2-term partial Gorenstein silting complex, then by the diagram above, we have that $h=s_{0}\sigma+\sigma s_{1}$. Then $f=\pi h=\pi s_{0}\sigma+\pi \sigma s_{1}=(\pi s_{0})\sigma$, which means that ${\rm Hom}_{{\rm A(Gproj)}}((E,\theta),\ (E, T))$ is an epimorphism. Therefore ${\rm Hom}_{A}(\theta, T)$ is an epimorphism, and so $T\in D_{\theta}$. This implies that $T$ is a partial Gorenstein silting module with respect to $\theta$.

\vskip 5pt

(1)$\Rightarrow$(2)\ By Theorem ~\ref{torsionpair}, we prove that $\mathcal{T}(G^{\bullet})\cap \mathcal{F}(G^{\bullet})={0}$ and $H^{0}(G^{\bullet})\in \mathcal{T}(G^{\bullet})$. From Remark ~\ref{torequi}, we have $T=H^{0}(G^{\bullet})\in D_{\theta}= \mathcal{T}(G^{\bullet})$. Let $X\in \mathcal{T}(G^{\bullet})\cap \mathcal{F}(G^{\bullet})= D_{\theta}\cap T^{\perp 0}= {\rm Gen}_{G}(T)\cap T^{\perp 0}$. Then there is a $G\mbox{-}$epimorphism $T_{0}\lxr X\lxr 0$ with $T_{0}\in {\rm Add}T$, and ${\rm Hom}_{A}(T,X)=0$. Therefore we can get from the induced exact sequence $0\lxr (X, X)\lxr (T_{0}, X)$ that $X=0$. Thus $G^{\bullet}: G_{1}\stackrel{\theta}{\lxr} G_{0}$ is a 2-term Gorenstein silting complex in $D_{gp}^{b}(A)$.

\vskip 5pt

(2)$\Rightarrow$(1)\ By the above claim, we see that $T$ is a partial Gorenstein silting module, and so ${\rm Gen}_{G}(T)\subseteq D_{\theta}$. From Proposition 3.9, for any $X\in D_{\theta}=\mathcal{T}(G^{\bullet})$, there is a $G$-exact sequence $$0\lxr L\lxr T_{0}\lxr X\lxr 0$$
with $T_{0}\in {\rm add}H^{0}(G^{\bullet})={\rm add}T$ and $L\in \mathcal{T}(G^{\bullet})$. Then we get that $X\in {\rm Gen}_{G}(T)$. Therefore $T$ is a Gorenstein silting module with respect to $\theta$ in ${\rm mod}A$.
\end{proof}

\vskip 10pt

Let $B={\rm End}_{D_{gp}^{b}(A)}(G^{\bullet})^{op}$. Consider the subcategories of ${\rm mod}B$
$$\mathcal{X}(G^{\bullet})= {\rm Hom}_{D_{gp}^{b}(A)}(G^{\bullet},\ \mathcal{F}(G^{\bullet})[1]) \ \ \ and \ \ \ \mathcal{Y}(G^{\bullet})= {\rm Hom}_{D_{gp}^{b}(A)}(G^{\bullet},\ \mathcal{T}(G^{\bullet})).$$
Then we can draw the Brenner-Butler theorem in this setting.

\vskip 5pt

\begin{thm}\
\label{torsioninb}\ Let $G^{\bullet}$ be a 2-term Gorenstein silting complex in $D_{gp}^{b}(A)$. Then $(\mathcal{X}(G^{\bullet}), \mathcal{Y}(G^{\bullet}))$ is a torsion pair in ${\rm mod}B$ and there are equivalences
$${\rm Hom}_{D_{gp}^{b}(A)}(G^{\bullet}, -): \mathcal{T}(G^{\bullet})\lxr \mathcal{Y}(G^{\bullet}),$$
and
$${\rm Hom}_{D_{gp}^{b}(A)}(G^{\bullet}, -[1]): \mathcal{F}(G^{\bullet})\lxr \mathcal{X}(G^{\bullet}).$$
The equivalences send $G$-exact sequences with terms in $\mathcal{T}(G^{\bullet})$ (resp. $\mathcal{F}(G^{\bullet})$) to short exact sequences in ${\rm mod}B$.
\end{thm}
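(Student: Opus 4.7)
The plan is to realize $\mathcal{Y}(G^{\bullet})$ and $\mathcal{X}(G^{\bullet})$ as the images inside ${\rm mod}B$ of $\mathcal{T}(G^{\bullet})$ and $\mathcal{F}(G^{\bullet})[1]$ under the equivalence
$$H:={\rm Hom}_{D_{gp}^{b}(A)}(G^{\bullet}, -)\colon \mathcal{C}_{gp}(G^{\bullet})\xrightarrow{\ \sim\ }{\rm mod}B$$
of Proposition~\ref{heart}(3), and then transport the standard torsion pair on the heart $\mathcal{C}_{gp}(G^{\bullet})$ coming from the underlying t-structure. By Proposition~\ref{heart}(2), an $A$-module $X$ (viewed as a stalk complex in degree zero) lies in $\mathcal{C}_{gp}(G^{\bullet})$ if and only if $X\in \mathcal{T}(G^{\bullet})$, and for an $A$-module $Y$ the shift $Y[1]$ lies in $\mathcal{C}_{gp}(G^{\bullet})$ if and only if $Y\in \mathcal{F}(G^{\bullet})$. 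Since $H$ is an equivalence on the heart, its restriction to $\mathcal{T}(G^{\bullet})$ and the composition $H(-[1])$ restricted to $\mathcal{F}(G^{\bullet})$ are automatically fully faithful, and their essential images are precisely $\mathcal{Y}(G^{\bullet})$ and $\mathcal{X}(G^{\bullet})$ by the very definitions of these subcategories. This already establishes the two displayed equivalences.

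For the torsion-pair assertion I would verify the paper's defining conditions directly. For the Hom-orthogonality, fully faithfulness of $H$ on the heart reduces the computation to
$${\rm Hom}_{B}\bigl(H(Y[1]),\, H(N)\bigr)\cong {\rm Hom}_{D_{gp}^{b}(A)}(Y[1],\, N)\cong {\rm Hom}_{D_{gp}^{b}(A)}(Y,\, N[-1])$$
for $Y\in\mathcal{F}(G^{\bullet})$ and $N\in\mathcal{T}(G^{\bullet})$, and this vanishes because a proper Gorenstein-projective resolution of the module $Y$ is concentrated in non-positive degrees, so modules admit no negative-degree morphisms in $D_{gp}^{b}(A)$. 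For the filtration, any $M\in {\rm mod}B$ equals $H(C^{\bullet})$ for a unique $C^{\bullet}\in \mathcal{C}_{gp}(G^{\bullet})$, and applying $H$ to the canonical triangle
$$H^{-1}(C^{\bullet})[1]\lxr C^{\bullet}\lxr H^{0}(C^{\bullet})\lxr H^{-1}(C^{\bullet})[2]$$
produces, by Proposition~\ref{heart}(1), a short exact sequence $0\lxr H(H^{-1}(C^{\bullet})[1])\lxr M\lxr H(H^{0}(C^{\bullet}))\lxr 0$ in ${\rm mod}B$ whose first term lies in $\mathcal{X}(G^{\bullet})$ and last in $\mathcal{Y}(G^{\bullet})$. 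Combining this with the Hom-vanishing yields conditions (2) and (3) of the torsion-pair definition in a routine way.

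The last clause of the theorem is then immediate: a $G$-exact sequence $0\lxr X_{1}\lxr X_{2}\lxr X_{3}\lxr 0$ with all $X_{i}\in \mathcal{T}(G^{\bullet})$ (respectively in $\mathcal{F}(G^{\bullet})$, in which case one shifts by $[1]$) lifts to a triangle in $D_{gp}^{b}(A)$ all of whose vertices sit in $\mathcal{C}_{gp}(G^{\bullet})$, hence is a short exact sequence in the heart by Proposition~\ref{heart}(1), and $H$ carries it to a short exact sequence in ${\rm mod}B$. The step I expect to require the most care is the vanishing ${\rm Hom}_{D_{gp}^{b}(A)}(Y, N[-1])=0$ for modules $Y,N$, since it rests on a careful inspection of how Homs between stalk complexes are computed in the bounded Gorenstein derived category; once that is in hand, everything else is essentially bookkeeping along the heart equivalence.
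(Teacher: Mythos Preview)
Your proposal is correct and takes essentially the same approach as the paper: the paper's proof is a single sentence invoking Proposition~\ref{heart}(1) and (3) together with the observation that $\mathcal{T}(G^{\bullet})\cup\mathcal{F}(G^{\bullet})[1]\subset \mathcal{C}_{gp}(G^{\bullet})$, and what you have written is exactly the unpacking of that sentence. Your explicit verification of the torsion-pair axioms via the canonical triangle $H^{-1}(C^{\bullet})[1]\to C^{\bullet}\to H^{0}(C^{\bullet})\to H^{-1}(C^{\bullet})[2]$ and the vanishing of negative Homs between stalk modules is precisely the content the paper leaves implicit; the only caveat is that this triangle for a general heart object is not recorded as a separate lemma in the paper (Lemma~\ref{triangle} states it only for $G^{\bullet}$ itself), so you should note that the same truncation argument applies verbatim to any two-term complex with the required approximation properties, or alternatively appeal to the description of $D^{\leq 0}_{gp}$ and $D^{\geq 0}_{gp}$ in the proof of Proposition~\ref{heart}(2).
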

\begin{proof}\ This follows from Proposition ~\ref{heart} (1) and (3), using that $\mathcal{T}(G^{\bullet})\cup \mathcal{F}(G^{\bullet})\subset \mathcal{C}_{gp}(G^{\bullet})$.
\end{proof}

\vskip 10pt

We finish this section with an interesting property of the 2-term Gorenstein silting complex over a finite dimensional Gorenstein algebra $A$ of finite CM-type with the Gorenstein-projective generator $E$.

\vskip 10pt

Let $G^{\bullet}: G_{1}\stackrel{d^{1}}{\lxr} G_{0}$ be the 2-term complex over ${\rm Gproj}A$, and set
$$P^{\bullet}: {\rm Hom}_{A}(E, G_{1})\lxr {\rm Hom}_{A}(E, G_{0}).$$

\vskip 5pt

\begin{prop}\ Suppose that $A$ is a Gorenstein algebra. Then  $G^{\bullet}$ is a 2-term Gorenstein silting complex in
$D_{gp}^{b}(A)$ if and only if $P^{\bullet}$ is a 2-term silting complex in $D^{b}({\rm A(Gproj)})$.
\end{prop}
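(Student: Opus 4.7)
The plan is to exhibit a triangle equivalence $F: D_{gp}^b(A) \lxr D^b({\rm A(Gproj)})$ that carries $G^\bullet$ to $P^\bullet$, and then to match the defining clauses of the two silting notions one by one. The starting point is the Auslander-type equivalence
$$\Phi := {\rm Hom}_A(E,-): {\rm Gproj}A \lxr {\rm proj}({\rm A(Gproj)}),$$
valid since $E$ is a finitely generated additive generator of ${\rm Gproj}A$. Extending termwise gives a triangle equivalence $\bar{\Phi}: K^b({\rm Gproj}A) \lxr K^b({\rm proj}({\rm A(Gproj)}))$ with $\bar{\Phi}(G^\bullet) = P^\bullet$.

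Next I would promote $\bar{\Phi}$ to the derived level using the Gorenstein hypothesis on both sides. Since $A$ is Gorenstein we have ${\rm Gdim}A < \infty$, so every $A$-module admits a bounded proper Gorenstein-projective resolution; by \cite{GZ} this makes the canonical functor $K^b({\rm Gproj}A) \lxr D_{gp}^b(A)$ a triangle equivalence. On the other side, by Beligiannis \cite{B2} the CM-Auslander algebra ${\rm A(Gproj)}$ has finite global dimension, so $K^b({\rm proj}({\rm A(Gproj)})) \simeq D^b({\rm A(Gproj)})$. Composing with $\bar{\Phi}$ yields the desired $F$. Under $F$, the Hom-vanishing ${\rm Hom}_{D_{gp}^b(A)}(G^\bullet, G^\bullet[1]) = 0$ is interchanged with ${\rm Hom}_{D^b({\rm A(Gproj)})}(P^\bullet, P^\bullet[1]) = 0$, and the thick-hull condition ${\rm thick}(G^\bullet) = K^b({\rm Gproj}A)$ is interchanged with ${\rm thick}(P^\bullet) = D^b({\rm A(Gproj)})$.

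The last clause to address is the approximation condition (i) of Definition~\ref{Gscomplex}, which asserts that $\Phi$ preserves the image and cokernel of $d^1$ at the module level, equivalently that ${\rm Ext}^1_A(E, {\rm Ker}\,d^1) = 0 = {\rm Ext}^1_A(E, {\rm Im}\,d^1)$. This clause has no direct counterpart in the classical definition of a 2-term silting complex, so the main obstacle will be to show it is automatic whenever $P^\bullet$ is a 2-term silting complex over ${\rm A(Gproj)}$. My intended route is to combine the torsion-pair description from Theorem~\ref{torsionpair} with the observation that, over a finite-global-dimension algebra, every 2-term silting complex is (up to isomorphism in $K^b({\rm proj})$) a minimal projective presentation of its zeroth cohomology; pulling such a presentation back along $\Phi^{-1}$ produces a $G^\bullet$ for which $G_0$ and $G_1$ automatically satisfy the required right ${\rm Gproj}A$-approximation properties.
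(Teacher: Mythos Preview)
Your core approach---transport through the triangle equivalence $D_{gp}^{b}(A)\simeq D^{b}({\rm A(Gproj)})$ induced by ${\rm Hom}_{A}(E,-)$---is exactly the paper's; the paper simply cites \cite{GZ} for this equivalence and records that full faithfulness of ${\rm Hom}_{A}(E,-)$ gives ${\rm Hom}_{D^{b}({\rm A(Gproj)})}(P^{\bullet},P^{\bullet}[1])\cong {\rm Hom}_{D_{gp}^{b}(A)}(G^{\bullet},G^{\bullet}[1])$, then stops. Your factorisation of the equivalence through $K^{b}({\rm Gproj}A)\simeq K^{b}({\rm proj}\,{\rm A(Gproj)})$ and the two identifications with the derived categories is a correct unpacking of that citation.

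Where you go further is in worrying about the approximation clause (i) of Definition~\ref{Gscomplex}; the paper's proof does not address (i) at all, so this is extra care on your part rather than a divergence from the paper. Two remarks on your plan for (i). First, it is not literally true that every 2-term silting complex over a finite-global-dimension algebra is a minimal projective presentation of its $H^{0}$: stalk summands of the form $P[1]$ (corresponding to the projective part of a support $\tau$-tilting pair) may occur, so $d^{1}$ need not be injective. Second, pulling back a chosen minimal representative along $\Phi^{-1}$ produces a \emph{different} two-term complex, not a verification of (i) for the given $G^{\bullet}$; to finish along these lines you would need either to check that (i) is invariant under homotopy equivalence in $K^{b}({\rm Gproj}A)$, or to read the proposition as a statement about isomorphism classes in $D_{gp}^{b}(A)$, which is how the paper's terse argument implicitly treats it.
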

\begin{proof}\ Since ${\rm Hom}_{A}(E, -)$ is a fully faithful functor, we have that
$${\rm Hom}_{D^{b}({\rm A(Gproj)})}(P^{\bullet},\ P^{\bullet}[1])\cong {\rm Hom}_{D_{gp}^{b}(A)}(G^{\bullet}, G^{\bullet}[1]).$$
On the other hand, from \cite{GZ}, there is a triangle-equivalence $D_{gp}^{b}(A)\cong D^{b}({\rm A(Gproj)})$ induced by ${\rm Hom}_{A}(E, -)$. This completes the proof.
\end{proof}

\vskip 10pt

\subsection{\bf On global dimension} \  In this subsection, we compare the global dimension between $A$ and $B$, where  $G^{\bullet}$ is a 2-term Gorenstein silting complex in $D_{gp}^{b}(A)$, and  $B={\rm End}_{D_{gp}^{b}(A)}(G^{\bullet})^{op}$.

\vskip 10pt

Recall from \cite{IY}, for full subcategories $\mathcal{X}$ and $\mathcal{Y}$ of $D^{b}_{gp}(A)$, denote $$\mathcal{X}\ast \mathcal{Y}:=\{Z\in D^{b}_{gp}(A)\mid {\rm there\ exists\ a\ triangle}\ X\lxr Z\lxr Y\lxr X[1]$$
$${\rm in}\ D^{b}_{gp}(A)\ {\rm with}\ X\in \mathcal{X}\ {\rm and}\ Y\in \mathcal{Y}\}.$$
By the octahedral axiom, we have that $(\mathcal{X}\ast \mathcal{Y})\ast \mathcal{Z}=\mathcal{X}\ast(\mathcal{Y}\ast \mathcal{Z})$. Call $\mathcal{X}$ extension closed if $\mathcal{X}\ast \mathcal{X}=\mathcal{X}$. Now fix ${\rm GP}={\rm add}G^{\bullet}$ and ${\rm GP_{c}}={\rm GP}\cap \mathcal{C}_{gp}(G^{\bullet})$.

\vskip 10pt

\begin{lem}\
\label{c1}
$\mathcal{C}_{gp}(G^{\bullet})\subset {\rm GP}\ast {\rm GP}[1]\ast \cdots \ast {\rm GP}[d+1]$ for some non-negative integer $d$.
\end{lem}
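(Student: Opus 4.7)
The idea is to resolve an arbitrary $X\in\mathcal{C}_{gp}(G^{\bullet})$ by iterated projective covers inside the heart, each cover contributing one additional shift of ${\rm GP}$, and then to bound the length of this resolution by the Gorenstein dimension of $A$.

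Fix $X\in\mathcal{C}_{gp}(G^{\bullet})$ and set $M={\rm Hom}_{D_{gp}^{b}(A)}(G^{\bullet},X)\in{\rm mod}B$. By Proposition~\ref{heart}(3), ${\rm Hom}_{D_{gp}^{b}(A)}(G^{\bullet},-)$ is an equivalence of abelian categories $\mathcal{C}_{gp}(G^{\bullet})\simeq{\rm mod}B$ under which $G^{\bullet}$ corresponds to the free right $B$-module, so the projective $B$-modules correspond precisely to the objects of ${\rm GP}_{c}$. A projective cover $Q_{0}\twoheadrightarrow M$ in ${\rm mod}B$ thus lifts to a short exact sequence
$$0\lxr K_{1}\lxr P_{0}\lxr X\lxr 0$$
in $\mathcal{C}_{gp}(G^{\bullet})$ with $P_{0}\in{\rm GP}_{c}\subset{\rm GP}$. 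By Proposition~\ref{heart}(1) this is a distinguished triangle in $D_{gp}^{b}(A)$, and its rotation $P_{0}\to X\to K_{1}[1]\to P_{0}[1]$ exhibits $X\in P_{0}\ast K_{1}[1]$. Iterating on the successive kernels $K_{1},K_{2},\dots$ and using the associativity of $\ast$ recalled just above the statement, after $n$ steps we obtain
$$X\in P_{0}\ast P_{1}[1]\ast\cdots\ast P_{n-1}[n-1]\ast K_{n}[n],$$
where each $P_{i}\in{\rm GP}$ and $K_{n}\in\mathcal{C}_{gp}(G^{\bullet})$ is the $n$th syzygy of $M$ in ${\rm mod}B$.

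It then suffices to exhibit a non-negative integer $d$, independent of $X$, such that $K_{d+1}\in{\rm GP}_{c}$ for every $X\in\mathcal{C}_{gp}(G^{\bullet})$; in that case the last factor is absorbed into ${\rm GP}[d+1]$ and we conclude $X\in{\rm GP}\ast{\rm GP}[1]\ast\cdots\ast{\rm GP}[d+1]$. This uniform termination, equivalently a uniform bound on ${\rm pd}_{B}M$ as $M$ ranges over ${\rm mod}B$, is the main obstacle. My strategy is to import the bound from the Gorenstein side: in the setting of this subsection we may assume ${\rm Gdim}A<\infty$ (it is the invariant used in the subsequent Theorem~\ref{gldim}), and for any $X$ in the heart the cohomologies $H^{0}(X)\in\mathcal{T}(G^{\bullet})$ and $H^{-1}(X)\in\mathcal{F}(G^{\bullet})$ are finitely generated $A$-modules of Gorenstein-projective dimension at most ${\rm Gdim}A$. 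Combining their proper Gorenstein-projective resolutions with the cohomology triangle $H^{-1}(X)[1]\to X\to H^{0}(X)\to H^{-1}(X)[2]$ and Proposition~\ref{heart}(4), which converts any ${\rm Gproj}A$-object into a triangle whose two outer terms lie in ${\rm GP}$, should yield $d={\rm Gdim}A$ as the uniform bound.

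The principal delicate point is the shift bookkeeping: Proposition~\ref{heart}(4) rewrites an $E\in{\rm Gproj}A$ using a triangle that introduces a negative shift, while the filtration sought allows only non-negative shifts $0,1,\dots,d+1$. I would handle this by incorporating the Gorenstein-projective conversions directly into the iterative projective-cover procedure, so that the positive shifts produced by the syzygies absorb the negative ones from Proposition~\ref{heart}(4) and one is left with precisely $d+1$ non-negative shifts on the nose.
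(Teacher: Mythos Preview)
Your primary route---taking projective covers of $M={\rm Hom}_{D_{gp}^{b}(A)}(G^{\bullet},X)$ in ${\rm mod}B$ and transporting back through the equivalence of Proposition~\ref{heart}(3)---is circular within the paper's logical structure. Uniform termination of that procedure is exactly a uniform bound on ${\rm pd}_{B}M$, i.e.\ ${\rm gldim}B<\infty$; but Lemma~\ref{c1} is precisely the input (together with Lemma~\ref{c2}) used to establish ${\rm gldim}B\leq d+1$ in Theorem~\ref{gldim}, so you cannot assume it here. You recognise this and propose instead to resolve $H^{0}(X)$ and $H^{-1}(X)$ by Gorenstein-projectives and then convert each $E\in{\rm Gproj}A$ to ${\rm GP}$ via Proposition~\ref{heart}(4). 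But that triangle, rotated, exhibits $E\in{\rm GP}[-1]\ast{\rm GP}$, and substituting termwise into a filtration $X\in{\rm Gproj}A\ast{\rm Gproj}A[1]\ast\cdots\ast{\rm Gproj}A[d+1]$ leaves a ${\rm GP}[-1]$ factor at the bottom. Your suggestion to ``absorb'' it into the syzygy shifts is not made precise, and since for a silting (as opposed to tilting) complex one need not have ${\rm Hom}_{D_{gp}^{b}(A)}(G^{\bullet},G^{\bullet}[-1])=0$, there is no evident orthogonality that would let you strip this term off.

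The paper's argument is different and sidesteps both problems. Rather than constructing the ${\rm GP}$-filtration from the covariant side, it bounds its length by a \emph{contravariant} vanishing: since $X^{\bullet}\in\mathcal{C}_{gp}(G^{\bullet})$ has cohomology only in degrees $-1,0$, one may replace $X^{\bullet}$ in $D_{gp}^{b}(A)$ by a bounded complex of Gorenstein-projectives concentrated in degrees $[-d-1,0]$, whence ${\rm Hom}_{D_{gp}^{b}(A)}(X^{\bullet},G^{\bullet}[i])=0$ for all $i\geq d+2$. Combined with the fact that every object of $D_{gp}^{\leq 0}(G^{\bullet})$ already lies in some finite ${\rm GP}\ast{\rm GP}[1]\ast\cdots\ast{\rm GP}[l]$, this vanishing forces $l\leq d+1$. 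The idea you are missing, then, is to look at ${\rm Hom}(X^{\bullet},G^{\bullet}[-])$ rather than ${\rm Hom}(G^{\bullet},X^{\bullet}[-])$: the former is what governs the length of the ${\rm GP}$-filtration (this is the co-$t$-structure side of silting theory), and it is bounded directly by ${\rm Gdim}A$ without any passage through ${\rm mod}B$.
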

\begin{proof}\ Note that
$$\mathcal{C}_{gp}(G^{\bullet})\subset D_{gp}^{\leq 0}(G^{\bullet})\subset {\rm GP}\ast {\rm GP}[1]\ast \cdots \ast {\rm GP}[l-1]\ast {\rm GP}[l]$$
for some $l>0$. For any $X^{\bullet}\in \mathcal{C}_{gp}(G^{\bullet})$, we have $H^{i}(X^{\bullet})=0$ for $i\neq -1,0$. Taking a projective resolution $P^{\bullet}$ of  $X^{\bullet}$, then there exists some non-negative integer $d$ such that $H^{i}(P^{\bullet})=0$ for $i>0$ or $i<-d-1$. Therefore
$${\rm Hom}_{D_{gp}^{b}(A)}(X^{\bullet},\ G^{\bullet}[i])\cong {\rm Hom}_{D_{gp}^{b}(A)}(P^{\bullet},\ G^{\bullet}[i])=0,\ i\geq d+2,$$
which implies that $X^{\bullet}\in {\rm GP}\ast {\rm GP}[1]\ast \cdots \ast {\rm GP}[d+1]$.
\end{proof}

\vskip 10pt

\begin{lem}\
\label{c2}
For the complex $X^{\bullet}\in \mathcal{C}_{gp}(G^{\bullet})\cap ({\rm GP_{c}}\ast {\rm GP_{c}}[1]\ast \cdots \ast {\rm GP_{c}}[m])$ for some $m\geq 0$, we have ${\rm pd}{\rm Hom}_{D_{gp}^{b}(A)}(G^{\bullet}, X^{\bullet})_{B}\leq m$.
\end{lem}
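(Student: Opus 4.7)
The plan is to induct on $m$, using the abelian equivalence $F := {\rm Hom}_{D_{gp}^{b}(A)}(G^{\bullet},-):\mathcal{C}_{gp}(G^{\bullet})\lxr \mathrm{mod}\,B$ from Proposition~\ref{heart}(3) to convert a ${\rm GP_{c}}[i]$-filtration of $X^{\bullet}$ in the heart into a projective resolution of $F(X^{\bullet})$ over $B$.

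For the base case $m=0$, the object $X^{\bullet}$ lies in ${\rm GP_{c}}\subseteq \mathrm{add}\,G^{\bullet}$, so $F(X^{\bullet})$ is a direct summand of some $F(G^{\bullet n})=B^{n}$ and hence a projective $B$-module; this gives $\mathrm{pd}\,F(X^{\bullet})_{B}=0$, as required.

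For the inductive step, I would begin with the defining triangle $Q\lxr X^{\bullet}\lxr Z\lxr Q[1]$ arising from the hypothesis $X^{\bullet}\in {\rm GP_{c}}\ast{\rm GP_{c}}[1]\ast\cdots\ast{\rm GP_{c}}[m]$ (so $Q\in {\rm GP_{c}}$ and $Z\in {\rm GP_{c}}[1]\ast\cdots\ast{\rm GP_{c}}[m]$), set $Y:=Z[-1]\in {\rm GP_{c}}\ast\cdots\ast{\rm GP_{c}}[m-1]$, and rotate to the triangle $Y\lxr Q\lxr X^{\bullet}\lxr Y[1]$. The strategy is to place this triangle inside $\mathcal{C}_{gp}(G^{\bullet})$, so that Proposition~\ref{heart}(1) reinterprets it as a short exact sequence $0\lxr Y\lxr Q\lxr X^{\bullet}\lxr 0$ in the heart; applying $F$ then produces a short exact sequence $0\lxr F(Y)\lxr F(Q)\lxr F(X^{\bullet})\lxr 0$ in $\mathrm{mod}\,B$ with $F(Q)$ projective, and the inductive hypothesis $\mathrm{pd}\,F(Y)_{B}\le m-1$ closes the induction to deliver $\mathrm{pd}\,F(X^{\bullet})_{B}\le m$.

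The technical heart of the argument is verifying $Y\in \mathcal{C}_{gp}(G^{\bullet})$. Feeding the rotated triangle into ${\rm Hom}_{D_{gp}^{b}(A)}(G^{\bullet},-)$ and using $Q,X^{\bullet}\in \mathcal{C}_{gp}(G^{\bullet})$, the resulting long exact sequence kills ${\rm Hom}_{D_{gp}^{b}(A)}(G^{\bullet},Y[i])$ for $i\notin\{0,1\}$ automatically, and identifies ${\rm Hom}_{D_{gp}^{b}(A)}(G^{\bullet},Y[1])$ with the cokernel of $F(Q)\lxr F(X^{\bullet})$. Hence $Y$ lies in $\mathcal{C}_{gp}(G^{\bullet})$ precisely when $F(Q)$ already surjects onto $F(X^{\bullet})$, which the specific $Q$ supplied by the filtration need not satisfy. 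I would therefore enlarge $Q$ by a summand $Q'\in {\rm GP_{c}}$ whose image under $F$ surjects onto the missing cokernel --- possible because $F(X^{\bullet})$ is a finitely generated $B$-module and, as $F$ restricts to an equivalence ${\rm GP_{c}}\simeq \mathrm{proj}\,B$, every finitely generated projective $B$-module admits a preimage in ${\rm GP_{c}}$ --- and then verify via an octahedral argument that the modified $Y$ still carries a filtration of length $m$ by shifted copies of ${\rm GP_{c}}$. Arranging this enlargement compatibly with preservation of the filtration length is the main obstacle; once it is settled the induction runs cleanly and the bound $\mathrm{pd}\,F(X^{\bullet})_{B}\le m$ follows.
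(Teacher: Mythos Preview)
Your inductive scheme is the same as the paper's, but the ``main obstacle'' you have identified is illusory, and the enlargement-plus-octahedral manoeuvre is unnecessary. In your notation, $Y[1]=Z$ lies in ${\rm GP_{c}}[1]\ast\cdots\ast{\rm GP_{c}}[m]$. Since ${\rm GP_{c}}\subseteq{\rm add}\,G^{\bullet}$ and ${\rm Hom}_{D_{gp}^{b}(A)}(G^{\bullet},G^{\bullet}[j])=0$ for all $j>0$ (the silting condition), one has ${\rm Hom}_{D_{gp}^{b}(A)}(G^{\bullet},{\rm GP_{c}}[j])=0$ for every $j\ge 1$, and hence ${\rm Hom}_{D_{gp}^{b}(A)}(G^{\bullet},Y[1])={\rm Hom}_{D_{gp}^{b}(A)}(G^{\bullet},Z)=0$. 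The long exact sequence therefore already gives the surjection $F(Q)\twoheadrightarrow F(X^{\bullet})$ without any modification of $Q$. With that in hand, your own long-exact-sequence computation (using $Q,X^{\bullet}\in\mathcal{C}_{gp}(G^{\bullet})$) shows $Y\in\mathcal{C}_{gp}(G^{\bullet})$ as well, so $Y$ again satisfies the hypothesis of the lemma with $m$ replaced by $m-1$, and the induction closes.

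This is precisely how the paper argues: it records the filtration triangles $X_{i+1}^{\bullet}\to O_{i}^{\bullet}\xrightarrow{g_{i}}X_{i}^{\bullet}\to X_{i+1}^{\bullet}[1]$ with $O_{i}^{\bullet}\in{\rm GP_{c}}$, observes from the silting vanishing that each $g_{i}$ is automatically a right ${\rm GP}$-approximation (i.e.\ ${\rm Hom}_{D_{gp}^{b}(A)}(G^{\bullet},g_{i})$ is surjective), and then reads off the short exact sequences giving ${\rm pd}\,F(X_{i}^{\bullet})\le {\rm pd}\,F(X_{i+1}^{\bullet})+1$. No enlargement or octahedral bookkeeping enters.
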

\begin{proof}\ Let $X_{0}^{\bullet}=X^{\bullet}$. There are triangles
$$X_{i+1}^{\bullet}\lxr O_{i}^{\bullet}\stackrel{g_{i}}{\lxr} X_{i}^{\bullet}\lxr X_{i+1}^{\bullet}[1],\ \ \ 0\leq i\leq m-1$$
where $O_{i}^{\bullet}\in {\rm GP_{c}}$ and $X_{i}^{\bullet}\in {\rm GP_{c}}\ast {\rm GP_{c}}[1]\ast \cdots \ast {\rm GP_{c}}[m-i]$. Since ${\rm Hom}_{D_{gp}^{b}(A)}(G^{\bullet}, G^{\bullet}[i])=0$ for all $i>0$, we have that $g_{i}$ is a right ${\rm GP}\mbox{-}$approximation of $X_{i}^{\bullet}$. Then we get the following induced exact sequence
$${\rm Hom}_{D_{gp}^{b}(A)}(G^{\bullet}, X_{i+1}^{\bullet})\lxr {\rm Hom}_{D_{gp}^{b}(A)}(G^{\bullet}, O_{i}^{\bullet})\xrightarrow{{\rm Hom}_{D_{gp}^{b}(A)}(G^{\bullet}, g_{i})} {\rm Hom}_{D_{gp}^{b}(A)}(G^{\bullet}, X_{i}^{\bullet})\lxr 0.$$
Then we get that
$${\rm pd} {\rm Hom}_{D_{gp}^{b}(A)}(G^{\bullet}, X_{i}^{\bullet})_{B}\leq {\rm pd} {\rm Hom}_{D_{gp}^{b}(A)}(G^{\bullet}, X_{i+1}^{\bullet})_{B}+1.$$
Therefore ${\rm pd}{\rm Hom}_{D_{gp}^{b}(A)}(G^{\bullet}, X^{\bullet})_{B}\leq {\rm pd}{\rm Hom}_{D_{gp}^{b}(A)}(G^{\bullet}, X_{m}^{\bullet})_{B}+m=m$.
\end{proof}

\vskip 10pt

\begin{thm}\
\label{gldim}\ Assume that $A$ has Gorenstein dimension $d$ for some positive integer $d$. Then ${\rm gldim} B\leq d+1$.
\end{thm}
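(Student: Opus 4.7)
The plan is to show ${\rm pd}_{B} N \leq d+1$ for every $N \in {\rm mod}B$, from which ${\rm gldim} B \leq d+1$ follows. By the equivalence of abelian categories in Proposition~\ref{heart}(3),
$${\rm Hom}_{D^{b}_{gp}(A)}(G^{\bullet},-) \colon \mathcal{C}_{gp}(G^{\bullet}) \xrightarrow{\ \sim\ } {\rm mod}B,$$
every $N$ has the form $N \cong {\rm Hom}_{D^{b}_{gp}(A)}(G^{\bullet}, X^{\bullet})$ for some $X^{\bullet} \in \mathcal{C}_{gp}(G^{\bullet})$, and Lemma~\ref{c1} places this $X^{\bullet}$ in ${\rm GP} \ast {\rm GP}[1] \ast \cdots \ast {\rm GP}[d+1]$. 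Because Lemma~\ref{c2} controls projective dimension via a ${\rm GP_{c}}$-filtration, the problem reduces to upgrading the ${\rm GP}$-filtration of length $d+1$ to a ${\rm GP_{c}}$-filtration of the same length.

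To carry out this upgrade, I would proceed inductively via projective covers in ${\rm mod}B$. The equivalence of Proposition~\ref{heart}(3) matches ${\rm GP_{c}} = {\rm add}G^{\bullet} \cap \mathcal{C}_{gp}(G^{\bullet})$ with the projective $B$-modules, so a projective cover $P_{0} \twoheadrightarrow N$ in ${\rm mod}B$ lifts to an epimorphism $O_{0}^{\bullet} \twoheadrightarrow X^{\bullet}$ in the heart $\mathcal{C}_{gp}(G^{\bullet})$ with $O_{0}^{\bullet} \in {\rm GP_{c}}$. Completing this to a triangle
$$X_{1}^{\bullet} \to O_{0}^{\bullet} \to X^{\bullet} \to X_{1}^{\bullet}[1]$$
in $D^{b}_{gp}(A)$ keeps $X_{1}^{\bullet}$ in $\mathcal{C}_{gp}(G^{\bullet})$ by Proposition~\ref{heart}(1), since $X_{1}^{\bullet}$ is the kernel in the heart. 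Iterating produces a chain of triangles $X_{i+1}^{\bullet} \to O_{i}^{\bullet} \to X_{i}^{\bullet} \to X_{i+1}^{\bullet}[1]$ with $O_{i}^{\bullet} \in {\rm GP_{c}}$ and $X_{i}^{\bullet} \in \mathcal{C}_{gp}(G^{\bullet})$, exactly as demanded by Lemma~\ref{c2}, and applying ${\rm Hom}_{D^{b}_{gp}(A)}(G^{\bullet}, -)$ recovers the minimal projective resolution of $N$ in ${\rm mod}B$.

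The main obstacle is to show that this iterative construction terminates within $d+1$ steps, that is, $X^{\bullet} \in {\rm GP_{c}} \ast {\rm GP_{c}}[1] \ast \cdots \ast {\rm GP_{c}}[d+1]$. The natural approach is to transfer the vanishing built into the proof of Lemma~\ref{c1}, namely ${\rm Hom}_{D^{b}_{gp}(A)}(X^{\bullet}, G^{\bullet}[n]) = 0$ for $n \geq d+2$. Dimension shifting along the triangles above, combined with the silting vanishing ${\rm Hom}_{D^{b}_{gp}(A)}(O_{i}^{\bullet}, G^{\bullet}[n]) = 0$ for $n > 0$, translates this to $X_{d+1}^{\bullet} \in {\rm GP_{c}}$, so the filtration stops at step $d+1$. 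Lemma~\ref{c2} then yields ${\rm pd}_{B} N \leq d+1$, and therefore ${\rm gldim} B \leq d+1$.
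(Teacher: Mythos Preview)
Your overall plan—combine Lemma~\ref{c1} with Lemma~\ref{c2} via the equivalence of Proposition~\ref{heart}(3)—matches the paper's, but you take a much longer route at the bridging step and leave a real gap. The paper does not attempt any iterative ``upgrade'' of a ${\rm GP}$-filtration to a ${\rm GP_{c}}$-filtration. Instead it records (under the qualifier that $G^{\bullet}$ is Gorenstein tilting) that $G^{\bullet}\in\mathcal{C}_{gp}(G^{\bullet})$, so that ${\rm GP}={\rm GP_{c}}$ on the nose; the filtration produced by Lemma~\ref{c1} is then already a ${\rm GP_{c}}$-filtration, and Lemma~\ref{c2} finishes the proof in one line. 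You miss this observation entirely.

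Your argument, by contrast, builds the filtration step by step via projective covers and then asserts that the dimension-shifted vanishing ${\rm Hom}_{D^{b}_{gp}(A)}(X_{d+1}^{\bullet},G^{\bullet}[n])=0$ for $n\geq 1$ ``translates to'' $X_{d+1}^{\bullet}\in{\rm GP_{c}}$. This is the gap: that vanishing only controls morphisms from $X_{d+1}^{\bullet}$ into positive shifts of the single object $G^{\bullet}$, whereas what you need is that $X_{d+1}^{\bullet}$ lies in ${\rm add}\,G^{\bullet}$—equivalently, that ${\rm Hom}_{D^{b}_{gp}(A)}(G^{\bullet},X_{d+1}^{\bullet})$ is a projective $B$-module. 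These are not the same without further input, for instance a co-t-structure argument identifying ${\rm add}\,G^{\bullet}$ as a coheart, or precisely the equality ${\rm GP}={\rm GP_{c}}$ that the paper uses and you avoid. As written, the termination of your iterative construction after $d+1$ steps is not established.
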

\begin{proof}\ If $G^{\bullet}$ is Gorenstein tilting, then $G^{\bullet}\in \mathcal{C}_{gp}(G^{\bullet})$. Therefore we have that ${\rm GP}={\rm GP_{c}}$. It follows from Lemma~\ref{c1} and \ref{c2} that ${\rm gldim} B\leq {\rm Gdim}A+1$.
\end{proof}

\vskip 20pt

\end{document}